\documentclass[12pt]{article}

\addtolength{\topmargin}{-3\baselineskip}
\addtolength{\textheight}{6\baselineskip}
\addtolength{\textwidth}{3cm}
\addtolength{\oddsidemargin}{-15mm}
\addtolength{\evensidemargin}{-15mm}

\usepackage[colorlinks=true,citecolor=black,linkcolor=black,urlcolor=blue]{hyperref}
\usepackage[italian, english]{babel}
\usepackage[utf8]{inputenc}
\usepackage{latexsym}
\usepackage{booktabs} 
\usepackage{amsfonts}
\usepackage[pdftex]{graphicx}
\usepackage{subfig}
\usepackage{caption}
\usepackage{textcomp} 
\usepackage{amsmath,amssymb,amsthm}
\usepackage{tikz,color,graphicx}
\usepackage{pgfplots}
\usepackage{floatflt,epsfig}
\usepackage{bbm}
\usepackage{blkarray}
\usepackage{kbordermatrix}

\usepackage{tikz}
\usetikzlibrary{positioning,chains,fit,shapes,calc}
\usetikzlibrary{arrows,intersections}

\theoremstyle{plain}
\newtheorem{teorema}{Theorem}[section]
\theoremstyle{definition}
\newtheorem{definizione}[teorema]{Definition}
\theoremstyle{definition}
\newtheorem{ass}[teorema]{Assumption}
\theoremstyle{definition}

\theoremstyle{definition}

\theoremstyle{definition}

\theoremstyle{plain}

\theoremstyle{plain}
\newtheorem{proposizione}[teorema]{Proposition}
\theoremstyle{plain}
\newtheorem{lemma}[teorema]{Lemma}
\theoremstyle{plain}

\theoremstyle{definition}
\newtheorem{remark}[teorema]{Remark}
\theoremstyle{plain}

\numberwithin{equation}{section}
\numberwithin{teorema}{section}

\def\dfrac#1#2{\lower0.15ex\hbox{\large$\frac{#1}{#2}$}}


\begin{document}

\pagestyle{plain}
\pagenumbering{arabic}


\title{Adding edge dynamics to bipartite random-access networks}
\author{\renewcommand{\thefootnote}{\arabic{footnote}}
\renewcommand{\thefootnote}{\arabic{footnote}}
Matteo Sfragara\,%
\footnotemark[1]}
\footnotetext[1]{%
Mathematical Institute, Leiden University, The Netherlands}
\date{\today}
\maketitle

\begin{abstract}

We consider random-access networks with nodes representing transmitter-receiver pairs whose signals interfere with each other depending on their vicinity. Data packets arrive at the nodes over time and form queues. The nodes can be either active or inactive: a node deactivates at unit rate, while it activates at a rate that depends on its queue length, provided none of its neighbors is active. In order to model the effects of user mobility in wireless networks, we analyze dynamic interference graphs where the edges are allowed to appear and disappear over time. We focus on bipartite graphs and study the transition time between the two states where one part of the network is active and the other part is inactive, in the limit as the queue lengths become large. Depending on the speed of the dynamics, we are able to obtain a rough classification of the effects of the dynamics on the transition time.

\vspace{1cm}
\noindent
\emph{Keywords:} Random-access networks, activation protocols, bipartite graphs, dynamic graphs, transition time. \\
\emph{MSC2010:} 
60K25, 
60K30, 
90B15, 
90B18. 
\end{abstract}

\newpage

\section{Motivation and background}

The present paper is a continuation of \cite{BdHNS18} and \cite{BdHNS20}. Our main goal is to extend the metastability analysis of random-access models for static networks to dynamic networks where edges appear and disappear over time. In Section~\ref{section1.1} we introduce random-access protocols for wireless networks and discuss the importance of studying their metastability properties (we refer to \cite[Section 1.1]{BdHNS18} and \cite[Section 1.1]{BdHNS20} for further background). In Section~\ref{section1.2} we motivate our interest in adding edge dynamics and letting the interference graph change over time. This represents a natural basic model to capture the effects of user mobility in wireless networks. In Section~\ref{section1.3} we turn our focus on bipartite networks and we explain how we build on the theory from \cite{BdHNS18} and \cite{BdHNS20}.

\subsection{Wireless random-access networks}
\label{section1.1}
Wireless communication consists of the transmission of data or information, without any conductor, from one device (transmitter) to another (receiver) through radio frequency and radio signals. In order to improve their performance and reduce collisions, which occur if nearby ongoing conflicting transmissions interfere with each other, wireless networks require a medium access control mechanism. Many such mechanisms have been proposed and analyzed and they can be mainly divided into two classes. In centralized algorithms, a global control entity has perfect information of all the interference constraints and coordinates all the transmissions by prescribing certain scheduling to the devices in the network. In distributed algorithms, the devices decide autonomously when to start a transmission using only local information. Most distributed algorithms involve randomness to avoid simultaneous transmissions and share the medium in the most efficient way. Thanks to their low implementation complexity, randomized algorithms, also called random-access algorithms, have become a popular mechanism for distributed-medium access control. 

The main idea behind random-access algorithms is to associate with each device a random clock, independently of all the other devices. The clock determines when the device attempts to access the medium in order to transmit. This back-off mechanism was developed to avoid simultaneous activity of nearby devices and to reduce the chances of collisions. Even though random-access algorithms can be described in a simple way and only require local information, their macroscopic behavior in large networks tends to be very complex: the network performance critically depends on the global spatial characteristics and the geometry of the network (see \cite{BB09t}, \cite{BB09a}). Indeed, nearby devices are typically prevented from simultaneous transmission in order to prevent them to interfere and to disturb each other’s signals.

The Carrier-Sense Multiple-Access (CSMA) algorithm is a collision avoidance protocol that combines the random back-off mechanism with interference sensing (see \cite{KT75}). The devices first sense the shared medium and only start a packet transmission if no ongoing transmission activity from interfering devices is detected. They attempt to transmit after a random back-off time, but if they sense activity of interfering devices, then they freeze their back-off timer until the transmission medium is sensed idle again. Note that several devices can transmit by accessing the same medium alternately. CSMA algorithms try to ensure that devices do not start a transmission at the same time in order to prevent collisions. They are popular in distributed random-access networks and various versions are currently implemented in IEEE 802.11 Wi-Fi networks.

Random-access networks with CSMA protocols can be modeled as interacting particle systems on graphs with hard-core interaction (see \cite{Z15}). The undirected graph, which we refer to as the interference graph, describes the conflicting transmissions of the devices due to interference. Each transmitter-receiver pair is represented by a particle, which is active when data packets are being transmitted and inactive otherwise. The interference graph encodes the spatial characteristics and the structure of the network, since neighboring particles are not allowed to be active simultaneously. 

\begin{figure}[htbp]
\begin{center}
\vspace{0.5cm}
\includegraphics[width=.45\linewidth]{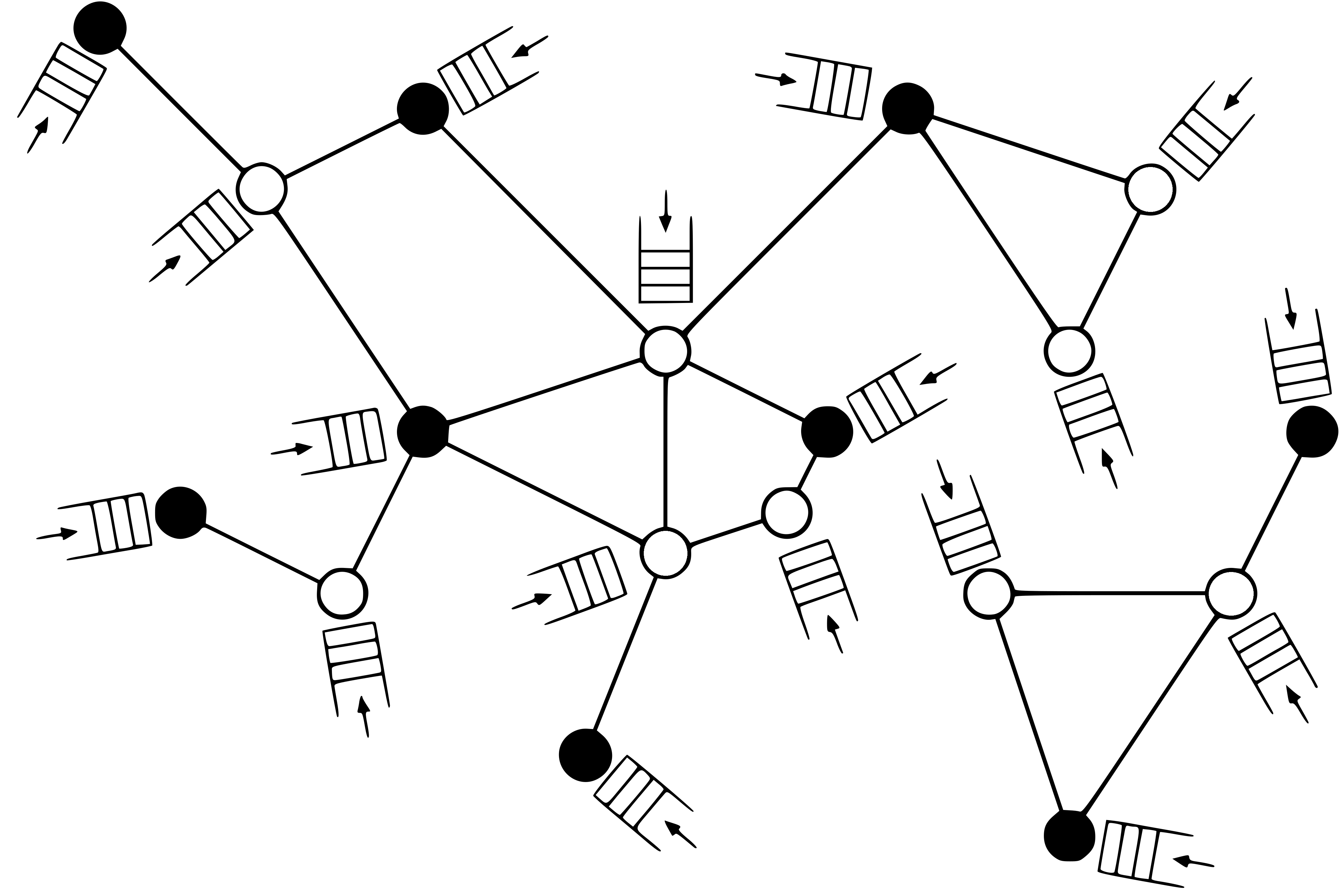}
\vspace{0.5cm}
\caption{\small A random-access network, where each node represents a transmitter-receiver pair with a queue of data packets.}
\label{fig:network}
\end{center}
\vspace{-0.5cm}
\end{figure}

Metastability is the phenomenon where a system, when subjected to a small noise, moves between different regions of its state space on different time scales (see \cite{BdH05}). A metastable state is a quasi-equilibrium that persists on a short time scale and represents a configuration where the energy of the system has a local minimum. On a long time scale, the system reaches an equilibrium, called a stable state, which represents a configuration where the energy of the system has a global minimum. The transition time between the two states depends on the depth of the energy valley around the metastable state and the shape of the bottleneck separating them. It is known that hard-core interaction models exhibit metastability effects, in the sense that the transition from a metastable to a stable state takes a very long time. Indeed, as the activation rates become large, the network tends to stabilize in configurations with the maximum number of active nodes and the transitions between these configurations become extremely slow. As a consequence, over finite time intervals, some nodes are excluded from activity, while other nodes are active essentially all the time. Although the aggregate throughput may improve, it has been observed that some nodes experience prolonged starvation, which results in a significant increase of their queues and long delays (see \cite{BCvL14}, \cite{DDT18}, \cite{DDT09}, \cite{GSK08}, \cite{GBW14}, \cite{SA11}, \cite{Z15}). Understanding metastability properties is of great practical importance, since they represent a major tool to analyze how likely such unfairness and starvation issues persist over time. Moreover, they are crucial to designing mechanisms that counter these effects and improve the overall performance of the network.

\subsection{User mobility}
\label{section1.2}

User \textit{mobility} is one of the major features in wireless networks. Different mobility patterns can be distinguished (pedestrians, vehicles, aerial, dynamic medium, robot, and outer space motion) and mathematical models can be developed in order to generalize such patterns and analyze their characteristics. Understanding the effects of user mobility in wireless networks is crucial in order to design efficient protocols and improve network performance.

Consider device-to-device (D2D) communication in cellular networks, defined as direct communication between devices without involving a base station. In recent years, there has been a renewed interest in D2D communication, due to a rapidly increasing demand for cellular data communication and the introduction of new proximity-based services. The D2D communication can provide a significant capacity gain by enabling the network to offload data traffic to direct communication links between devices. However, without a proper interference control mechanism, a D2D link can generate serious interference to other D2D links. To protect a D2D receiver, fully-distributed random access protocols, which employ a competition mechanism conceptually similar to CSMA protocols, have been implemented by creating an exclusion region around the receiver, where interfering devices are prohibited from transmitting (see \cite{ZCK15}). Interest has also been drawn towards sensor networks, where significant challenges arise when dealing with the coordination of the increasing number of devices accessing shared resources. Great effort has been invested in reducing the random-access collisions in order to efficiently manage resources in the context of D2D interaction (see \cite{AHK17}, \cite{HHN18}). Encouraged by the popularity of using D2D communications and unlicensed bands in cellular systems (see \cite{LCTH16}, \cite{WGYLWCZ16}), various new congestion control approaches based on D2D communications and device grouping have been developed due to their advantages in energy consumption and access delay (see \cite{HSHDS19}, \cite{SGBHP16}). In this setting of networks with D2D communication, when allowing user mobility, new groups of devices are formed and, since their signals interfere with each other depending on their vicinity, the interference graph of the network changes over time. 

To the best of our knowledge, random-access models with user mobility in the context of interference graphs have so far not been considered in the literature. All the studies we are aware of that have examined the impact of user mobility in wireless networks are concerned with handover mechanisms (see \cite{R02}, \cite{TV05}), so-called opportunistic scheduling algorithms (see \cite{BHP09}, \cite{VTL02}), capacity issues in ad hoc and cellular networks (see \cite{BPH06}, \cite{GT02}), and flow-level performance (see \cite{BBP04}, \cite{BBHJP09}, \cite{BBP04}, \cite{BS13}, \cite{SS20}).

In this paper we investigate a dynamic version of the random-access protocols in order to try to capture some features of user mobility in wireless networks. A natural paradigm for constructing \textit{dynamic interference graphs} would be to use geometric graphs, such as unit-disk graphs. Each node, representing a transmitter-receiver pair, follows a random trajectory and experiences interference from all nodes within a certain distance. A feasible state of the interference graph would then be generated by a specific instance of the geometric graph. We follow a different approach and, with an explorative intention, we consider a model where edges are allowed to appear and disappear from the graph according to i.i.d.\ Poisson clocks placed on each of them. 

\subsection{Bipartite interference graphs}
\label{section1.3}

The work in \cite{BdHNS18} focuses on random-access networks modeled by complete bipartite interference graphs, where the nodes are partitioned into two nonempty sets $U$ and $V$. In the limit as the queue lengths become large, starting from the configuration in which all the nodes in $U$ are active and all the nodes in $V$ are inactive, it is of particular interest to determine the transition time to the configuration where all the nodes in $U$ are inactive and all the nodes in $V$ are active, since it represents a metastable crossover. In \cite{BdHNS20}, the model is extended to arbitrary bipartite interference graphs, where not necessarily all the nodes in $U$ interfere with all the nodes in $V$. This turns out to be considerably more challenging, since the transition depends on the full structure of the graph. The problem is solved thanks to a greedy algorithm that identifies sequences of subtransitions together with their probabilities and allows to determine the mean transition time and its distribution on the scale of its mean. It is assumed that the activation rates of the nodes in $V$ are more aggressive than in $U$ to ensure that the transition can be decomposed into a sequence of subtransitions. Both the models analyzed in \cite{BdHNS18} and \cite{BdHNS20} deal with random-access protocols for static networks. Their novelty consists in assuming queue-based activation rates, which raise challenging problems and have recently attracted great attention (see the discussion in Section~\ref{sectiondiscussion}).

In this paper we extend these queue-based random access protocols to networks modeled by dynamic bipartite graphs, where we add edge dynamics in order to capture some effects of user mobility. Although there is no specific physical reason for keeping the focus on bipartite graphs, this assumption allows mathematical tractability and represents an important step toward more general network topologies. For dynamic bipartite networks, we build on the theory of \cite{BdHNS18} and \cite{BdHNS20}, and in particular of the above-mentioned algorithm, and we combine it with a detailed analysis of the effects of the dynamics on the degrees of the nodes. Our approach is based on the intuition that a node in $V$ can activate either when its neighbors in $U$ are simultaneously inactive (as for static networks) or when the edges connecting it with its neighbors in $U$ disappear. Interpolation between these two situations gives rise to different scenarios and interesting behavior. In the limit as the queue lengths become large, we are able to describe the typical behavior of the network and identify how the mean transition time depends on the speed of the dynamics.

\section{Model description}
\label{modeldescription}

In this section we define the mathematical model of our interest, in line with the models in \cite{BdHNS18} and \cite{BdHNS20}. We first describe how static networks can be modeled by arbitrary bipartite interference graphs. We then add edge dynamics and discuss how the graph evolves over time.

\subsection{The static model} 
\label{ss:model}

We consider the bipartite graph $G=(U \sqcup V, E)$, where $U \sqcup V$ is the set of nodes and $E$ is the set of edges that connect a node in $U$ to a node in $V$, and vice versa (edges are undirected).

A node can be either \textit{active} or \textit{inactive}. The state of node $w$ at time $t$ is described by a Bernoulli random variable $X_w(t) \in \{0,1\}$, defined as
\begin{equation} 
X_w(t) = 
\begin{cases} 
0, \text{ if } w \text{ is inactive at time } t ,\\ 
1, \text{ if } w \text{ is active at time } t. 
\end{cases}
\end{equation} 
The \textit{joint node activity state} at time $t$ is denoted by
\begin{equation}
{\bf X}(t) = \{ X_w(t) \}_{w \in U \sqcup V}
\end{equation}
and it is an element of the state space
\begin{equation}
\mathcal{X} = \big\{{\bf X} \in \{0,1\}^{|U \sqcup V|} \colon\, X_iX_j = 0\,\,\,\forall\, (i,j) \in E\big\},
\end{equation}
where $X_i = 0$ means that node $i$ is inactive and $X_i =1$ that it is active. We denote by ${\bf 1_U}$ (${\bf 1_V}$) the configuration where all nodes in $U$ are active (inactive) and all nodes in $V$ are inactive (active).

An active node $w$ becomes inactive according to a \textit{deactivation Poisson clock}: when the clock ticks the node deactivates. Conversely, an inactive node $w$ attempts to become active according to an \textit{activation Poisson clock}, but the attempt is successful only when no neighbors of $w$ are active. While the deactivation rate is $1$, interesting scenarios arise when the activation rate at node $w$ at time $t$ depends on the queue length at node $w$ at time $t$.

\begin{definizione}[\bf{Queue length at a node}]
Let $t \mapsto Q_w^+(t)$ be the \textit{input process} describing packets arriving at node $w$ according to a Poisson process $t \mapsto N_w(t) = \mathrm{Poisson}(\sigma t)$ and requiring i.i.d.\ exponential service times $Y_{wn}$, $n \in \mathbb{N}$, with rate $\xi_U$ for $w \in U$ and $\xi_V$ for $w \in V$. This is a compound Poisson process with mean $\rho_U = \sigma / \xi_U$ for $w \in U$ and $\rho_V = \sigma / \xi_V$ for $w \in V$. Let $t \mapsto Q_w^-(t)$ be the \textit{output process} representing the cumulative amount of work that is processed by the server at node $w$ in the time interval $[0,t]$ at speed $c$, which equals $cT_w(t) = c \int_0^t X_w(s) ds$. In order to ensure that the queue length tends to decrease when a node is active, we assume that $\rho_U < c$ and $\rho_V < c$. Define
\begin{equation} 
\Delta_w(t) = Q_w^+(t) - Q_w^-(t) = \sum_{n=0}^{N_w(t)} Y_{wn} - c T_w(t)
\end{equation}
and let $s^* = s^*(t)$ be the value where $\sup_{s \in [0,t]} [\Delta_w(t) - \Delta_w(s)]$ is reached, i.e., equals $[\Delta_w(t) - \Delta_w(s^*-)]$. Let $Q_w(t) \in \mathbb{R}_{\geq 0}$ denote the \textit{queue length} at node $w$ at time $t$. Then
\begin{equation}
Q_w(t) = \max\big\{ Q_w(0) + \Delta_w(t),\,\Delta_w(t)-\Delta_w(s^*-) \big\},
\end{equation}
where $Q_w(0)$ is the initial queue length. The maximum is achieved by the first term when $Q_w(0) \geq -\Delta_w(s^*-)$ (the queue length never sojourns at $0$), and by the second term when $Q_w(0) < -\Delta_w(s^*-)$ (the queue length sojourns at $0$ at time $s^*-$).
\end{definizione}

The \textit{initial queue length} is assumed to be given by
\begin{equation}
\label{initialqueues}
Q_w(0) = 
\left\{\begin{array}{ll} 
\gamma_U r, &w \in U, \\  
\gamma_V r, &w \in V, 
\end{array}
\right.
\end{equation}
where $\gamma_U \geq \gamma_V > 0$, and $r$ is a parameter that tends to infinity.

We now focus on a queue-based model where the activation rates depend on the current state of the queues. Later, in Section~\ref{modelwithfixedrates} we will also consider a simplified version of the model with fixed activation rates.

\begin{ass}[{\bf Queue-based activation rates}]
\label{queuedependentrates}
Let $g_U, g_V \in \mathcal{G}$ with
\begin{equation}
\mathcal{G} = \Big\{g\colon\,\mathbb{R}_{\geq 0} \to \mathbb{R}_{\geq 0}\colon\,
g \text{ non-decreasing and continuous},\, g(0)=0,\, \lim_{x \to \infty} g(x) = \infty\Big\}.
\end{equation}
The deactivation clocks tick at rate $1$, while the activation clocks tick at rate 
\begin{equation}
\phi_w(t) = 
\left\{\begin{array}{ll} 
g_U(Q_w(t)), &w \in U, \\ 
g_V(Q_w(t)), &w \in V,
\end{array}
\right. 
\qquad t \geq  0.
\end{equation}
We focus on the particular choice
\begin{equation} 
\begin{array}{ll}
g_U(x) = B x^{\beta}, &x \in [0, \infty), \\
g_V(x) = B' x^{\beta'}, &x \in [0, \infty),
\end{array}
\end{equation}
with $B,B', \beta, \beta' \in (0, \infty)$. We assume that nodes in $V$ are {\it much more aggressive} than nodes in $U$, namely,
\begin{equation}
\beta' > \beta +1.
\end{equation}
This ensures that the transition from ${\bf 1_U}$ to ${\bf 1_V}$ can be decomposed into a succession of transitions on complete bipartite subgraphs. Note that, from a metastability perspective, ${\bf 1_U}$ represents the metastable state, while ${\bf 1_V}$ represents the stable state.
\end{ass}

We next introduce the transition time of the graph $G$, which is our main object of interest in this paper.
\begin{definizione}[{\bf Transition time}]
Let ${\bf Q_U} = \{ Q_{U,i} \}_{i = 1}^{|U|}$ be the sequence of queues associated with the nodes in $U$, and ${\bf Q_V} = \{ Q_{V,j} \}_{j = 1}^{|V|}$ the sequence of queues associated with the nodes in $V$. We denote by $\mathcal{T}_G^Q$ the \textit{transition time} of the graph $G$ conditional on the initial queue lengths ${\bf Q}=({\bf Q_U},{\bf Q_V})$ and we define it as
\begin{equation} \label{transition}
\mathcal{T}_G^Q= \min \big\{t \geq 0\colon\, {\bf X}(t) = {\bf 1_V}\} \quad \text{ given } \quad {\bf X}(0) = {\bf 1_U}.
\end{equation}
It represents the time it takes to hit configuration ${\bf 1_V}$ starting from configuration ${\bf 1_U}$.
\end{definizione}

Note that the transition time is a random variable that depends on the initial queue lengths, hence all the expectations should be interpreted as conditional expectations.

\subsection{Adding edge dynamics}
We are interested in analyzing the behavior of the network when we allow the interference graph to change over time. We define a dynamic version of the model, which we are going to study in this paper.

\begin{definizione}[\bf{Dynamic interefence graphs}]
\label{wn3:def:dynamic}
We say that the interference graph is \textit{dynamic} when the edges appear and disappear according to a continuous-time flip process. Consider the dynamic bipartite interference graph $G(\cdot) = (U \sqcup V, E(\cdot))$, where $U \sqcup V$ is the set of nodes, with $|U|=M$ and $|V|=N$, and $E(t)$ is the set of edges that are present between nodes in $U$ and nodes in $V$ at time $t$. The number of edges $|E(\cdot)|$ changes over time and can vary from a minimum of $0$ to a maximum of $MN$. We set $G(0) = G$, where $G$ is the initial bipartite graph. We denote by $G_{MN} = (U \sqcup V, E_{MN})$ the complete bipartite graph associated to $(U,V)$ and, for every edge $e \in E_{MN}$, at time $t$ we define the Bernoulli random variable $Y_e(t)$ as
\begin{equation}
Y_e(t) =
\begin{cases}
0, \text{ if } e \notin E(t), \\
1, \text{ if } e \in E(t).
\end{cases}
\end{equation}
In other words, $Y_e(t) = 0$ if edge $e$ is not present in the graph at time $t$, while $Y_e(t) = 1$ if it is present. The \textit{joint edge activity state} at time $t$ is denoted by
\begin{equation}
{\bf Y}(t) = \{ Y_e(t) \}_{e \in E_{MN}}
\end{equation}
and is an element of the state space 
\begin{equation}
\mathcal{Y} = \big\{{\bf Y} \in \{0,1\}^{MN} \big\}.
\end{equation}
The degree of node $v$ at time $t$ is denoted by $d_v(t)$. 
\end{definizione}

We model the dynamics of the graph in the following way. If an edge is not present, then it \textit{appears} according to a Poisson clock with rate $\lambda$, independently of the other edges. If an edge is present, then it \textit{disappears} according to a Poisson clock with rate $\lambda$, independently of the other edges. This is equivalent to having a collection of i.i.d.\ Poisson clocks with rate $\lambda$ on the edges and letting an edge change its state every time its clock ticks. In order to study how the edge dynamics affect the transition time, we consider Poisson clocks with rates $\lambda = \lambda(r)$ depending on the parameter $r$.

Throughout the paper we use the notation $\prec$, $\preceq$, $\asymp$, $\succeq$, $\succ$ to describe the asymptotic behavior in the limit $r \to \infty$. More precisely, as $r \to \infty$, $f(r) \prec g(r)$ means that $f(r) = o(g(r))$, $f(r) \preceq g(r)$ means that $f(r) = \mathcal{O}(g(r))$, $f(r) \asymp g(r)$ means that $f(r) = \Theta(g(r))$, $f(r) \succeq g(r)$ means that $g(r) = \mathcal{O}(f(r))$, and $f(r) \succ g(r)$ means that $g(r) = o(f(r))$.

\begin{remark}[\bf{Model variations}]
\label{modelvariations}
Below we discuss some model variations that we do not consider in this paper but might lead to other interesting scenarios.
\begin{itemize}
\item[(i)] {\bf Edge rates}. We may allow different rates for the edges to change their state. Denote by $\lambda^+(r)$ and $\lambda^-(r)$ the rates at which edges appear and disappear, respectively. If these are of the same order, then we are in a situation similar to them being equal to $\lambda(r)$. If $\lambda^+(r) \to \infty$ and $\lambda^-(r) \prec \lambda^+(r)$, then, with high probability as $r \to \infty$, in time $o(1)$ the dynamics turn the initial graph into the complete bipartite graph with all the edges present. Analogously, if $\lambda^-(r) \to \infty$ and $\lambda^-(r) \succ \lambda^+(r)$, then, with high probability as $r \to \infty$, in time $o(1)$ the dynamics turn the initial graph into the empty graph with all the edges absent. Both these assumptions do not lead to interesting models. However, when $\lambda^+(r)$ and $\lambda^-(r)$ are of a different order and do not tend to infinity, we have an intermediate situation where at any time $t$ an edge is either present with high probability as $r \to \infty$ or absent with high probability as $r \to \infty$, but the total amount of time the edge has been absent or present, respectively, up to time $t$ is not always negligible.
\item[(ii)] {\bf Appearing edge}. When an edge appears between two active nodes, we assume that the active node in $U$ deactivates, since the model does not allow two connected nodes to be simultaneously active. It would be interesting to study also alternative models, where the active node in $V$ deactivates or where the deactivating node is chosen at random. The fact that these models slow down the transition motivates our choice. 
\end{itemize}
\end{remark}

\subsection{The graph evolution}
\label{sec:graphevolution}

The node activity process $({\bf X}(t), {\bf Q}(t))_{t \geq 0}$ and the edge activity process $({\bf Y}(t))_{t \geq 0}$ form a continuous-time Markov process on 
\begin{equation}
\mathcal{X} \times \mathbb{R}_{\geq 0} \times \mathcal{Y}
\end{equation}
that describes the evolution of the graph under the effects of the dynamics. We refer to this process as the \textit{graph evolution process}. Control on this process allows us to understand how the degrees of the nodes change over time and how nodes activate, and it is the key to obtaining precise asymptotics for the mean transition time.

In order to keep track of the states of the nodes and the edges in the graph, we focus on the number of active neighbors each node has.

\begin{definizione}[\bf {Active degree}]
We define the \textit{active degree} of a node as the number of its active neighbors. 
For $u \in U$, the active degree at time $t$ is given by
\begin{equation}
\label{activedegreeu}
\tilde{d}_u(t) = | \{v \in V:\, uv \in E(t), X_v(t) =1 \} |.
\end{equation}
Analogously, for $v \in V$, the active degree at time $t$ is given by 
\begin{equation}
\label{activedegreev}
\tilde{d}_v(t) = | \{u \in U:\, uv \in E(t), X_u(t) =1 \} |.
\end{equation}
\end{definizione}

\noindent
Note that for a node to activate, its active degree must be 0. It is immediate to see that the active degree of a node cannot exceed its degree, i.e., for any $u \in U$ and $v \in V$
\begin{equation}
\tilde{d}_u(t) \leq d_u(t) \qquad \text{and} \qquad \tilde{d}_v(t) \leq d_v(t).
\end{equation}

Consider the set of feasible states where some nodes are active and some edges are present, where by feasible we mean that it respects the constraints given by the edges, for which two connected nodes cannot be active simultaneously. An arbitrary feasible state at time $t$ has $h$ active nodes in $U$ with $h= 0, \ldots, M$, $k$ active nodes in $V$ with $k=0, \ldots, N$, and $l$ present edges with $l=0, \ldots, MN$. Consequently, there are $M-h$ inactive nodes in $U$, $N-k$ inactive nodes in $V$, and $MN-l$ absent edges. Note that the initial state ${\bf 1_U}$ is described by $h=M$, $k=0$ and $l=|E(0)|$, while the transition occurs as soon as state ${\bf 1_V}$ is reached, for which $k=N$. The graph evolution is governed by different Poisson clocks ticking at various rates: the activation clocks, the deactivation clocks and the edge clocks (note that the queue lengths, hence the input process, also play a role, since the activation rates depend on them). We analyze how the graph evolves each time one of these clocks ticks.

\begin{itemize}
\item[-] The activation clock of a node $u \in U$ ticks at rate $g_U(Q_u(t))$ at time $t$. The probability of this clock being the first one to tick is given by
\begin{equation}
\frac{g_U(Q_u(t))}{Z},
\end{equation}
with 
\begin{equation}
Z = \sum_{i=1}^{M-h} g_U(Q_i(t)) + \sum_{j=1}^{N-k} g_V(Q_j(t)) + h + k + MN \lambda(r).
\end{equation}
The tick has two possible effects: if the neighbors of $u$ are all inactive, then $u$ activates and the active degrees of all its neighbors increase by $1$; if there is at least one active neighbor of $u$, then the activation attempt fails and nothing happens.
\item[-] The deactivation clock of a node $u \in U$ ticks at rate $1$. The probability of this clock being the first one to tick is given by $\frac{1}{Z}$. Node $u$ deactivates and the active degrees of all its neighbors decrease by $1$.
\item[-] The activation clock of a node $v \in V$ ticks at rate $g_V(Q_v(t))$ at time $t$. The probability of this clock being the first one to tick is given by $\frac{g_V(Q_v(t))}{Z}$.
The tick has two possible effects: if the neighbors of $v$ are all inactive, then $v$ activates and the active degrees of all its neighbors increase by $1$; if there is at least one active neighbor of $v$, then the activation attempt fails and nothing happens.
\item[-] The deactivation clock of a node $v \in V$ ticks at rate $1$. The probability of this clock being the first one to tick is given by $\frac{1}{Z}$. Node $v$ deactivates and the active degrees of all its neighbors decrease by 1.
\item[-] The activation clock of an edge $e \in E_{MN}$ ticks at rate $\lambda(r)$. The probability of this clock being the first one to tick is given by $\frac{\lambda(r)}{Z}$. Depending on which edge appears or disappears and on the nodes involved, the tick has different effects on the graph, which are described below. 
\end{itemize}

If we know the number of active nodes in $U$ and $V$, then we can compute the probabilities of each of the following scenarios with simple combinatorial arguments.
There are four possible scenarios in which an edge can appear.
\begin{itemize}
\item[($\circ \, \circ$)] When an edge between two inactive nodes appears, their degrees increase by 1.
\item[($\circ \, \bullet$)] When an edge between an inactive node in $U$ and an active node in $V$ appears, the active degree of the node in $U$ increases by 1 and the degree of the node in $V$ increases by 1.
\item[($\bullet \, \circ$)] When an edge between an active node in $U$ and an inactive node in $V$ appears, the degree of the node in $U$ increases by 1 and the active degree of the node in $V$ increases by 1.
\item[($\bullet \, \bullet$)] When an edge between two active nodes appears, the node in $U$ deactivates, its active degree increases by 1, the active degrees of all its neighbors in $V$ decrease by 1 and the degree of the node in $V$ increases by 1.
\end{itemize}
In a similar fashion, there are three possible scenarios in which an edge can disappear. Recall that there cannot be an edge between two active nodes.
\begin{itemize}
\item[($\circ \, \circ$)] When an edge between two inactive nodes disappears, their degrees decrease by 1.
\item[($\circ \, \bullet$)] When an edge between an inactive node in $U$ and an active node in $V$ disappears, the active degree of the node in $U$ decreases by 1 and the degree of the node in $V$ decreases by 1.
\item[($\bullet \, \circ$)] When an edge between an active node in $U$ and an inactive node in $V$ disappears, the degree of the node in $U$ decreases by 1 and the active degree of the node in $V$ decreases by 1.
\end{itemize}

Note that the transition time is strongly related to the graph evolution process, since the activation times of the nodes in $V$ depend on the activation rates, the speed of the dynamics and the active degrees of the nodes.

\section{Main results}
In this section we state the main results of the paper. We first consider the queue-based model described in Section~\ref{modeldescription} and we show how the dynamics affect the order of the mean transition time (Theorem~\ref{meantrtime}). We then consider a simplified version of the model where the activation rates are fixed and we adapt results from \cite{BdHNS18} and \cite{BdHNS20} in order to study the mean transition time of networks modeled by complete bipartite graphs (Theorem~\ref{adaptedthmpaper1}), arbitrary bipartite graphs (Theorem~\ref{adaptedthmpaper2}) and dynamic bipartite graphs (Theorem~\ref{adaptedmeantrtime}).

\subsection{Model with queue-based activation rates}

We begin with an overview of the main definitions and results in \cite{BdHNS20}, which studies the transition time of static networks modeled by arbitrary bipartite graphs with queue-based activation rates. These results are important in order to extend the analysis of the transition time to networks modeled by dynamic bipartite graphs.

The key factor in \cite{BdHNS20} is the introduction of a \textit{randomized algorithm} that takes the graph as input and gives as output the set of transition paths the network is most likely to follow. Along each path it is possible to determine the mean transition time and its law on the scale of its mean. 
A \textit{path} $a = (v_1, \dots, v_N)$ is defined as a sequence of activating nodes in $V$, where each node is present only once. The set of all paths, i.e., of every possible ordering of the nodes in $V$, is denoted by $\Omega$. The algorithm takes as input the graph and gives as output a path together with its probability: at every step $k=1, \dots, N$, each of the $n_k$ nodes in $V_k$ of minimum degree $\bar{d}_k$ is selected with probability $1/n_k$, where $G_k = (U_k, V_k)$ is the induced subgraph of $G$ in which the first $k-1$ selected nodes in $V$ and their neighbors are removed. The set of all paths that can be generated by the algorithm is referred to as the set of \textit{admissible paths} and it is denoted by $\mathcal{A}$. 
With each selected node, it is associated the \textit{nucleation time} of the complete bipartite subgraph consisting of only the node and its neighbors (see Definition~\ref{nucleationdef}), and it is known that the leading order term of the mean nucleation time depends on the degree of the node. It is possible to compute the transition time along a path, by decomposing the transition into the succession of nucleations of the nodes in the path. It then becomes crucial to determine the maximum degree along an admissible path.

\begin{definizione}[\bf{Maximum least degree}]
Given the sequence $(\bar{d}_k)_{k=1}^N$ generated by the algorithm, let 
\begin{equation}
\label{d*}
d^* = \max_{1 \leq k \leq N} \bar{d}_k
\end{equation}
be the \textit{maximum least degree} of the admissible path associated with $(\bar{d}_k)_{k=1}^N$.
\end{definizione}
It is shown in \cite[Proposition 2.9]{BdHNS20} that the algorithm is greedy, in the sense that at every step it selects the node that adds the least to the transition time along the path. It is also shown in \cite[Proposition 2.10]{BdHNS20} that the algorithm is consistent, in the sense that all the admissible paths have the same value of $d^*$. Hence, the maximum least degree is unique and it determines the leading order of the mean transition time along any admissible paths. 

The main results for static bipartite graphs are proven in \cite[Theorems 3.3 and 3.5]{BdHNS20} and show that, depending on the relation between $d^*$ and $\beta$, the transition exhibits a trichotomy: we observe a phase transition which we describe with the terms \textit{subcritical regime}, \textit{critical regime} and \textit{supercritical regime}, borrowed from statistical physics. If $\beta \in (0, \frac{1}{d^*-1})$, we are in the subcritical regime where each node in $V$ activates in a random time described by an exponential variable with mean $o(r)$. If $\beta = \frac{1}{d^*-1}$, we are in the critical regime where some (or all) nodes in $V$ activate in a random time described by a variable with truncated polynomial law and mean of order $r$. In those two regimes, the mean transition time of the graph can be computed by averaging over all the admissible paths. If $\beta \in (\frac{1}{d^*-1}, \infty)$, we are in the supercritical regime where nodes in $U$ are so aggressive that the time it takes to activate at least one node in $V$ is given by the average time it takes for the queue lengths at nodes in $U$ to hit zero (which is of order $r$), hence a deterministic behavior observed, also known in the literature as cut-off. In this regime, the mean transition time is then the time it takes on average for the queue lengths at nodes in $U$ to hit zero.
 
Our goal in this paper is to extend the above-mentioned results of \cite{BdHNS20} for static bipartite graphs to dynamic bipartite graphs. In order to achieve it, we distinguish between different types of dynamics and study how they affect the mean transition time. For static bipartite graphs, it is also possible to identify the law of the transition time divided by its mean (in the subcritical and supercritical regimes). However, this goes beyond the scope of the present paper, since understanding the effects of the edge dynamics is rather challenging. 

Through the paper, we write $\mathbb{P}_{1_U}$ and $\mathbb{E}_{1_U}$ to denote probability and expectation on the path space given that the initial configuration is ${\bf 1_U}$.

\begin{remark}[{\bf Typical behavior of the network}]
\label{typical}
Note that we are interested in the \textit{typical} behavior of the network, in the following sense.
\begin{itemize}
\item[(i)]
Let $\mathcal{F}$ be the event that the network follows the algorithm, i.e., that the transition follows any of the admissible paths. The mean transition time of the graph $G$ given the initial queue lengths ${\bf Q^0}$ can be then split as
\begin{equation}
\label{eqsplittransition}
\mathbb{E}_{1_U}[\mathcal{T}_{G}^{Q^0}] = \mathbb{E}_{1_U}[\mathcal{T}_{G}^{Q^0} \mathbbm{1}_{\mathcal{F}}]  
+ \mathbb{E}_{1_U}[\mathcal{T}_{G}^{Q^0}\mathbbm{1}_{\mathcal{F}^C}].
\end{equation}
The second term in the right-hand side represents the mean transition time when the network does \emph{not} follow the algorithm, and equals
\begin{equation}
\mathbb{E}_{1_U}[\mathcal{T}_{G}^{Q^0}\mathbbm{1}_{\mathcal{F}^C}] 
=\mathbb{E}_{1_U}[\mathcal{T}_{G}^{Q^0} | \mathcal{F}^C] \, \mathbb{P}_{1_U}(\mathcal{F}^C).
\end{equation}
It is proved in \cite[Theorem 3.2]{BdHNS20} that $\lim_{r \to \infty} \mathbb{P}_{1_U}(\mathcal{F}^C) =0$. However, the total mean transition time can still be affected by this term, since the conditional expectation may be substantial. The work in \cite{BdHNS20} focuses on the first term in the right-hand side of \eqref{eqsplittransition}, which then captures the typical behavior of the network.

\item[(ii)] When introducing the dynamics, we will see in Proposition~\ref{NvsD} that the activation of each node in $V$ is characterized by a competition between its nucleation and the disconnection from its neighbors due to the dynamics. In many cases, we are able to describe the outcome of this competition with high probability as $r \to \infty$. We then condition on an event $\mathcal{B}$ such that $\lim_{r \to \infty} \mathbb{P}_{1_U} (\mathcal{B}^C) = 0$, which captures the typical behavior of the competition. More details can be found in the proofs in Section~\ref{proofs}.
\end{itemize}
Our results are then conditioned on the event $\mathcal{E} = \mathcal{F} \cap \mathcal{B}$, which captures the typical behavior of the network with respect to the transition path it follows and to the outcome of the competition for the activation of the nodes in $V$. Note that $\mathcal{E}$ is intended to capture the typical behavior of the network only until time scale $\lambda(r)^{-1}$, which turns out to be the time scale of the mean transition time when the dynamics are ``not too slow". In particular, in the case of competitive dynamics (SDc), when we consider $\lambda(r) = r^{-\alpha}$ with $0 < \alpha \leq 1 \wedge \beta(d^*-1)$, the event $\mathcal{E}(\alpha) = \mathcal{F}(\alpha) \cap \mathcal{B}(\alpha)$ refers to the typical behavior until time scale $r^{\alpha}$.
\end{remark}

Below, we state our main result. We denote by $\mathcal{T}_{G(\cdot)}^{Q^0}$ the transition time of the dynamic graph $G(\cdot)$ conditional on the initial queue lengths ${\bf Q^0}$.

\begin{teorema}[\bf {Mean transition time for dynamic bipartite graphs}]
\label{meantrtime}
Consider the dynamic bipartite graph $G(\cdot)= ((U,V),E(\cdot))$ with the edge dynamics governed by $\lambda(r)$ and initial queue lengths ${\bf Q^0}$. Suppose that Assumption~\ref{queuedependentrates} holds.

\begin{itemize}
\item[$(\mathrm{FD})$] If the dynamics are fast, i.e., $\lambda(r) \to \infty$, then the transition time satisfies
\begin{equation}
\mathbb{E}_{1_U}[\mathcal{T}_{G(\cdot)}^{Q^0} \, | \, \mathcal{E}] \asymp \lambda(r)^{-1} = o(1), \qquad r \to \infty.
\end{equation}
\item[$(\mathrm{RD})$] If the dynamics are regular, i.e., $\lambda(r) = C \in (0, \infty)$, then the transition time satisfies
\begin{equation}
\mathbb{E}_{1_U}[\mathcal{T}_{G(\cdot)}^{Q^0}\, | \, \mathcal{E}] \asymp \lambda(r)^{-1}= \mathcal{O}(1), \qquad r \to \infty.
\end{equation}
\item[$(\mathrm{SD})$] If the dynamics are slow, i.e., $\lambda(r) \to 0$, then the following cases occur.
\begin{itemize}
\item[$(\mathrm{SDc})$] If the dynamics are competitive, i.e., $\lambda(r) \succeq r^{-(1 \wedge \beta(d^*-1))}$, then the transition time satisfies
\begin{equation}
\mathbb{E}_{1_U}[\mathcal{T}_{G(\cdot)}^{Q^0}\, | \, \mathcal{E}] \asymp \lambda(r)^{-1}, \qquad r \to \infty.
\end{equation}
More precisely, let $\lambda(r) = r^{-\alpha}$ with $0 < \alpha \leq 1 \wedge \beta(d^*-1)$, and let $T_U(r) = \frac{\gamma_U}{c-\rho_U} r \, [1+o(1)]$ be the average time it takes for the queue lengths at nodes in $U$ to hit zero. Then the transition time satisfies
\begin{equation}
\mathbb{E}_{1_U}[\mathcal{T}_{G(\cdot)}^{Q^0} \, | \, \mathcal{E}(\alpha)] \asymp r^{\alpha}, \qquad r \to \infty.
\end{equation}
Moreover, for $\alpha=1$, when $\beta = \frac{1}{d^*-1}$, i.e., in the critical regime,
\begin{equation}
\lim_{r \to \infty} \mathbb{P}_{1_U} \big(\mathcal{T}_{G(\cdot)}^{Q^0} < T_U(r) \, | \, \mathcal{E}(1) \big)=1,
\end{equation}
while when $\beta \in (\frac{1}{d^*-1}, \infty)$, i.e., in the supercritical regime,
\begin{equation}
\lim_{r \to \infty} \mathbb{P}_{1_U} \big(\mathcal{T}_{G(\cdot)}^{Q^0} = T_U(r) \,|\, \mathcal{B}(1)\big) = 1- \lim_{r \to \infty} \mathbb{P}_{1_U} \big(\mathcal{T}_{G(\cdot)}^{Q^0} < T_U(r) \,|\, \mathcal{B}(1)\big) > 0.
\end{equation}

\item[$(\mathrm{SDnc})$] If the dynamics are non-competitive, i.e., $\lambda(r) \prec r^{-(1 \wedge \beta(d^*-1))}$, then, conditional on the event $\mathcal{E}$, Theorem 3.3 in \cite{BdHNS20} holds.
\end{itemize}
\end{itemize}
\end{teorema}

Note that the order of the mean transition time depends on the speed of the dynamics. When the dynamics are fast (FD), the edges quickly appear and disappear, reaching in time $o(1)$ the state where nodes in $V$ have no edges connecting them to $U$. Since nodes in $V$ are aggressive, they eventually activate in time $o(1)$. When the dynamics are regular (RD), the situation is similar, but it takes time $\mathcal{O}(1)$ to reach the state where all the edges are simultaneously absent. When the dynamics are slow (SD), in the case of competitive dynamics (SDc), the relation between the speed of the dynamics and the aggressiveness of the nodes in $U$ plays a key role, while in the case of non-competitive dynamics (SDnc), the network behaves as if the edges were fixed at the initial configuration and there were no dynamics. Note that, as we anticipated, in the cases of fast, regular and competitive dynamics, the order of the mean transition time is given by the reciprocal of the rate $\lambda(r)$. 
The term \textit{competitive} is chosen to describe the case in which some nodes in $V$ activate with positive probability \textit{both} because their neighbors are simultaneously inactive and because the edges connecting them to their neighbors disappear.

\subsection{Model with fixed activation rates}
\label{modelwithfixedrates}
We have seen how the dynamics affect the mean transition time of wireless random-access models where the activation rates depend on the current queue lengths at the nodes. Not much is known in the literature for random-access protocols with dynamic interference graphs, even for models with \textit{fixed activation rates}. In this section we adapt the theory built in \cite{BdHNS18} and \cite{BdHNS20} to study the effects of the dynamics on these types of models. 

\begin{ass}[{\bf Fixed activation rates}]
\label{fixedrates}
Assume that the deactivation clocks tick at rate $1$, while the activation clocks tick at rate
\begin{equation}
\phi_w(t) = 
\left\{\begin{array}{ll} 
r^{\beta},& \text{ if } w \in U, \\
r^{\beta'},& \text{ if } w \in V,
\end{array}
\right.
\qquad t \geq  0.
\end{equation}
with $\beta, \beta' \in (0, \infty)$ and $\beta' > \beta + 1$. Recall that we are interested in the transition time asymptotics as $r \to \infty$.
\end{ass}

We start by adapting the result for complete bipartite graphs from \cite[Theorem 1.7]{BdHNS18} to the model with fixed activation rates. The following theorem is consistent with \cite[Example 4.1]{dHNT18}.

\begin{teorema}[{\bf Mean transition time for complete bipartite graphs}]
\label{adaptedthmpaper1}
Consider the complete bipartite graph $G = ((U,V),E)$ with initial queue lengths ${\bf Q^0}$ as in \eqref{initialqueues}. Suppose that Assumption~\ref{fixedrates} holds.
\begin{itemize}
\item[{\rm (I)}] $\beta \in (0, \frac{1}{|U|-1})$: subcritical regime. The transition time satisfies
\begin{equation}
\mathbb{E}_{1_U}[\mathcal{T}_G^{Q^0}] = \frac{1}{|U|} r^{\beta (|U|-1)} \, [1+o(1)], \qquad r \to \infty.
\end{equation}
\item[{\rm (II)}] $\beta = \frac{1}{|U|-1}$: critical regime. The transition time satisfies
\begin{equation}
\mathbb{E}_{1_U}[\mathcal{T}_G^{Q^0}] = \frac{1}{|U|} r \, [1+o(1)], \qquad r \to \infty.
\end{equation}
\item[{\rm (III)}] $\beta \in (\frac{1}{|U|-1}, \infty)$: supercritical regime. The transition time satisfies
\begin{equation}
\mathbb{E}_{1_U}[\mathcal{T}_G^{Q^0}] = \frac{\gamma_U}{c-\rho_U} r \, [1+o(1)], \qquad r \to \infty.
\end{equation}
\end{itemize}
\end{teorema}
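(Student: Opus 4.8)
The plan is to reduce the transition $1_U \to 1_V$ to a one-dimensional first-passage problem for the number of active nodes in $U$, and then to identify two competing mechanisms that drive this number to zero. First I would exploit the complete bipartite structure: since every node in $V$ is adjacent to every node in $U$, a node in $V$ can activate only at an instant when \emph{all} $|U|$ nodes of $U$ are simultaneously inactive; and, since $\beta' > \beta + 1$, as soon as one node in $V$ activates it pins every node of $U$ in the inactive state, after which the number of active nodes in $V$ is a birth--death chain with birth rate $\asymp r^{\beta'} \to \infty$ and death rate $1$, hence reaches $|V|$ (in probability and in mean) within an additional time $o(1)$ without returning to $0$. Thus $\mathcal{T}_G^{Q^0} = \tau^*\,(1 + o_{\mathbb{P}}(1))$, where $\tau^*$ is the first time all nodes of $U$ are inactive in the auxiliary process with the nodes of $V$ frozen inactive — a process that coincides with the true one up to $\tau^*$ — and a uniform-integrability argument (based on exponential tail bounds for the first-passage times below) then passes this to $\mathbb{E}_u[\mathcal{T}_G^{Q^0}]$. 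It remains to analyse $\tau^*$.

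For $\tau^*$, observe that as long as the queues at the nodes of $U$ are positive — which holds throughout a window of length $\asymp r$, since $Q_w(0) = \gamma_U r$ and $\Delta_w(t) \approx -(c-\rho_U)t$ while $w$ is active essentially all the time — each node of $U$ evolves as an independent two-state chain, active for an $\mathrm{Exp}(1)$ time and inactive for an $\mathrm{Exp}(r^\beta)$ time (there are never active neighbours, as $G$ is bipartite and $V$ is frozen). Hence the number $J(t)$ of active nodes of $U$ is a birth--death chain on $\{0,\dots,|U|\}$ with birth rate $(|U|-j)r^\beta$ and death rate $j$, strongly pushed towards $|U|$. A gambler's-ruin computation shows that an excursion of $J$ away from $|U|$ reaches $0$ with probability $r^{-\beta(|U|-1)}(1+o(1))$, while such excursions are initiated at rate $|U|(1+o(1))$, so the first time $J$ hits $0$ — call it $\tau_{\mathrm{fl}}$ — is asymptotically exponential with mean $\tfrac{1}{|U|}\,r^{\beta(|U|-1)}(1+o(1))$; at that instant a node of $V$ activates with high probability, since $r^{\beta'} \gg r^\beta$. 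The competing mechanism is queue depletion: the queues at the nodes of $U$ all reach $0$ at time $\tau_{\mathrm{dep}} = T_U(r)(1+o_{\mathbb{P}}(1)) = \tfrac{\gamma_U}{c-\rho_U}\,r\,(1 + o_{\mathbb{P}}(1))$ — with concentration coming from the law of large numbers for the compound Poisson input and for $T_w(t) = \int_0^t X_w(s)\,ds$ — after which a node of $U$ is barred from sustained activity and is inactive a $\Theta(1)$ fraction of time, making ``all of $U$ inactive'' non-rare, so that the transition completes within a further $O(1)$ time.

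Combining, $\tau^* = (\tau_{\mathrm{fl}} \wedge \tau_{\mathrm{dep}})(1+o(1))$, and comparing orders closes each case. In the subcritical regime $\beta(|U|-1) < 1$ one has $r^{\beta(|U|-1)} \prec r$, so the fluctuation mechanism wins, the whole transition happens within a time $o(r)$ during which the two-state picture is exact, and $\mathbb{E}_u[\mathcal{T}_G^{Q^0}] = \tfrac{1}{|U|}r^{\beta(|U|-1)}(1+o(1))$. In the supercritical regime $\beta(|U|-1) > 1$ one has $r^{\beta(|U|-1)} \succ r$, so with high probability $\tau_{\mathrm{dep}} < \tau_{\mathrm{fl}}$, the transition is governed by depletion, and $\mathbb{E}_u[\mathcal{T}_G^{Q^0}] = \tfrac{\gamma_U}{c-\rho_U}r(1+o(1))$. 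In the critical regime $\beta = \tfrac{1}{|U|-1}$ both mechanisms act on scale $r$, but the fluctuation now fires at the \emph{constant} rate $|U|\,r^{-1}(1+o(1))$ — the activation rate stays $\asymp r^\beta$ throughout, rather than decaying as the queue shrinks — producing ``all of $U$ inactive'' at mean time $\tfrac{1}{|U|}r$, which (comparing with $\tau_{\mathrm{dep}}$) is reached first, so $\mathbb{E}_u[\mathcal{T}_G^{Q^0}] = \tfrac{1}{|U|}r(1+o(1))$.

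The hard part will be the sharp exponential (Poisson) approximation for $\tau_{\mathrm{fl}}$ — obtaining the leading constant $\tfrac{1}{|U|}$ with a genuine $(1+o(1))$ error, and doing so uniformly over the slowly drifting queue background (which requires showing the drift does not perturb the leading constant, and controlling the mixing and independence estimates that justify the replacement by $|U|$ independent two-state chains up to time $\asymp r$) — together with the book-keeping at the critical value $\beta(|U|-1)=1$, where the passage from the rare-event phase to the depleted phase around $t = T_U(r)$ must be controlled and the two contributions disentangled. The remaining ingredients — the $o(1)$ mean cost of activating the remaining nodes of $V$, the concentration of the depletion time, and the uniform integrability — are comparatively routine.
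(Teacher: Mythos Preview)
Your proposal is correct and follows essentially the same route as the paper, which simply invokes \cite[Sections~4.1--4.2]{BdHNS18} and records that the adaptation to fixed rates is straightforward; the birth--death reduction and gambler's-ruin computation you sketch for $\tau_{\mathrm{fl}}$, together with the comparison against the depletion time $T_U(r)=\tfrac{\gamma_U}{c-\rho_U}r$, are exactly what underlies that reference and the paper's brief proof. One point to adjust in regime~(III): with \emph{fixed} activation rates a node in $U$ is not ``barred from sustained activity'' after its queue empties --- its activation rate remains $r^\beta$ regardless --- so your $\Theta(1)$-fraction-inactive argument belongs to the queue-based model; the paper instead closes the supercritical case by stating that ``when the queue lengths at nodes in $U$ hit zero, the nodes in $U$ deactivate by assumption,'' and you should invoke that model rule rather than the drift-to-zero mechanism.
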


Next, we adapt the result from \cite[Theorem 3.3]{BdHNS20} to arbitrary bipartite graphs with fixed activation rates. Note that the algorithm still plays a crucial role in determining the mean transition time. Recall that, at each step $k=1, \dots, N$ of the algorithm, $n_k$ denotes the number nodes in $V_k$ of minimum degree $\bar{d}_k$.

\begin{teorema}[{\bf Mean transition time for arbitrary bipartite graphs}]
\label{adaptedthmpaper2}
Consider the bipartite graph $G = ((U,V),E)$ with initial queue lengths ${\bf Q^0}$ as in \eqref{initialqueues}. Suppose that Assumption~\ref{fixedrates} holds. Let $\mathcal{F}_a$ be the event that the network follows the admissible path $a \in \mathcal{A}$.
\begin{itemize}
\item[{\rm (I)}] $\beta \in (0, \frac{1}{d^*-1})$: subcritical regime. The transition time satisfies
\begin{equation}
\mathbb{E}_{1_U}[\mathcal{T}_G^{Q^0} \, | \, \mathcal{F}_a] = \sum_{\substack{1 \leq k \leq N \\ k: \,\bar{d}_k = d^*}} \frac{1}{n_k d^*}\, r^{\beta(d^*-1)}\,[1+o(1)], \qquad r \to \infty.
\end{equation}
\item[{\rm (II)}] $\beta = \frac{1}{d^*-1}$: critical regime. Then the transition time satisfies
\begin{equation}
\mathbb{E}_{1_U}[\mathcal{T}_G^{Q^0} \, | \, \mathcal{F}_a] = \sum_{\substack{1 \leq k \leq N \\ k: \,\bar{d}_k = d^*}} \frac{1}{n_k d^*}\, r\,[1+o(1)], \qquad r \to \infty.
\end{equation}
The above result holds as long as the pre-factor is below the value $\frac{\gamma_U}{c-\rho_U}$, which corresponds to the time it takes for the queue lengths at nodes in $U$ to hit zero. Otherwise, the supercritical regime applies.
\item[{\rm (III)}] $\beta \in (\frac{1}{d^*-1},\infty)$: supercritical regime. The transition time satisfies
\begin{equation}
\mathbb{E}_{1_U}[\mathcal{T}_G^{Q^0}] = \frac{\gamma_U}{c-\rho_U} r\, [1+o(1)], \qquad r \to \infty.
\end{equation}
\end{itemize}
\end{teorema}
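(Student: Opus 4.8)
The plan is to transfer the analysis of \cite[Theorem~3.3]{BdHNS20} (and, for the complete bipartite building block, of \cite[Theorem~1.7]{BdHNS18}) from queue-based to fixed activation rates, i.e.\ to replace $g_U(Q_u(t))=B Q_u(t)^{\beta}$ by the constant rate $r^{\beta}$ at every node in $U$; in the subcritical and critical regimes this is a simplification rather than an obstacle, since the activation rate of a node in $U$ no longer drifts in time and the first-passage problems that arise become time-homogeneous. \emph{Step 1 (decomposition into complete bipartite steps).} The hypothesis $\beta'>\beta+1$ makes the nodes in $V$ super-aggressive: once all the $U$-neighbours of some $v\in V$ are inactive — an event whose window stays open for a time of order $r^{-\beta}$ — $v$ activates on time scale $r^{-\beta'}\ll r^{-\beta}$, and once active it stays \emph{morally active}, deactivating at rate $1$ but then reactivating (rate $r^{\beta'}$) before any of its $U$-neighbours does (total rate $O(r^{\beta})$) with probability $1-O(r^{\beta-\beta'})$. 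Over the whole transition the relevant horizon is of order $r^{\beta(d^*-1)}\le r$ in the subcritical and critical regimes, and there $\beta'>\beta+1$ forces $\beta'>\beta d^*$, so the expected number of such ``bad'' reactivation events is $O(r^{\beta d^*-\beta'})=o(1)$. Hence, with high probability as $r\to\infty$, the process passes through a sequence of metastable configurations with $k=0,1,\dots,N$ nodes of $V$ morally active together with their $U$-neighbours deactivated; given the first $k-1$ activations (which, with their neighbourhoods, are removed), the next node to activate is one of the $n_k$ nodes of minimum degree $\bar d_k$ in the induced subgraph $G_k$, each chosen with probability $1/n_k$. This identifies the randomized algorithm with the true dynamics, so conditioning on the admissible path $a\in\mathcal A$ fixes the sequences $(\bar d_k)_{k=1}^{N}$ and $(n_k)_{k=1}^{N}$.

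\emph{Step 2 (single-step asymptotics and summation).} Conditionally on $A_a$, the $k$-th transition is the first time some minimum-degree node $v\in V_k$ sees all of its $\bar d_k$ neighbours in $U_k$ simultaneously inactive. On the subgraph spanned by the minimum-degree candidates and their $U_k$-neighbours this is precisely the first-passage problem treated, for the complete bipartite case, by Theorem~\ref{adaptedthmpaper1} with $|U|=\bar d_k$, combined with the competition among the $n_k$ candidates analysed in \cite{BdHNS20}; carrying that argument over with $r^{\beta}$ in place of $g_U$ gives that the $k$-th step lasts $\frac{1}{n_k\bar d_k}r^{\beta(\bar d_k-1)}[1+o(1)]$ in the subcritical regime (asymptotically exponential) and $\frac{1}{n_k\bar d_k}r[1+o(1)]$ at criticality (truncated-polynomial law). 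Since $N$ is fixed and $r^{\beta(\bar d_k-1)}=o(r^{\beta(d^*-1)})$ whenever $\bar d_k<d^*$, summing over $k=1,\dots,N$ yields
\begin{equation}
\mathbb E_u[\mathcal T_G^{Q^0}\mid A_a]=\sum_{k:\,\bar d_k=d^*}\frac{1}{n_kd^*}\,r^{\beta(d^*-1)}\,[1+o(1)],
\end{equation}
and the same with $r$ replacing $r^{\beta(d^*-1)}$ at criticality, which are (I) and (II).

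\emph{Step 3 (supercritical regime and the overflow case of (II)).} When $\beta>\frac{1}{d^*-1}$, and likewise when the prefactor appearing in (II) exceeds $\frac{\gamma_U}{c-\rho_U}$, the fluctuation time scale $r^{\beta(d^*-1)}$ reaches or exceeds the deterministic time $T_U=\frac{\gamma_U}{c-\rho_U}\,r$ on which the queue lengths at the nodes in $U$ — which in this regime are active a fraction $1-o(1)$ of the time — simultaneously drain to $0$. As in \cite{BdHNS18,BdHNS20}, the transition is then driven by this draining rather than by a fluctuation: these queues start at $\gamma_U r$ and drain at the common rate $c-\rho_U+o(1)$, so they empty essentially together, after which the super-aggressive nodes of $V$ cascade in time $o(r)$ regardless of the graph, the admissible-path decomposition becomes irrelevant, and
\begin{equation}
\mathbb E_u[\mathcal T_G^{Q^0}]=\frac{\gamma_U}{c-\rho_U}\,r\,[1+o(1)],
\end{equation}
which is (III).

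I expect the main obstacle to be Step~1: one must control the morally-active approximation uniformly over a horizon that grows with $r$ — excluding both that a node of $V$, once active, stays off long enough for the next node of $V$ to activate prematurely, and that its $U$-neighbours reactivate during such an excursion — and transfer the competition estimate of \cite{BdHNS20} to the fixed-rate setting, including the bookkeeping of overlapping neighbourhoods of the $n_k$ candidate nodes, so that the resulting error stays of strictly lower order than the leading term.
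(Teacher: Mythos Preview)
Your proposal is correct and follows essentially the same approach as the paper: both reduce to the admissible-path decomposition of \cite{BdHNS20}, invoke Theorem~\ref{adaptedthmpaper1} for each complete-bipartite step, and sum the step contributions, with the supercritical case handled by the queue-draining time $T_U$. The paper's own proof is in fact much terser---little more than a reference to \cite[Sections~2, 4.2]{BdHNS20} together with the remark that fixed rates render the step nucleation times genuinely i.i.d.\ exponential---so your Step~1 (the ``morally active'' bookkeeping) and the quantitative check $\beta d^*<\beta'$ spell out explicitly what the paper leaves to the citation.
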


Finally, we show that the results from Theorem~\ref{meantrtime} also hold when we consider a dynamic bipartite graph with fixed activation rates. We are able to compute the order of the mean transition time, while the pre-factor still depends on the graph evolution described in Section~\ref{sec:graphevolution}.

\begin{teorema}[{\bf Mean transition time for dynamic bipartite graphs}]
\label{adaptedmeantrtime}
Consider the dynamic bipartite graph $G(\cdot) = ((U,V),E(\cdot))$ with the edge dynamics governed by $\lambda(r)$ and initial queue lengths ${\bf Q^0}$. Suppose that Assumption~\ref{fixedrates} holds. Then the results of Theorem~\ref{meantrtime} hold.
\end{teorema}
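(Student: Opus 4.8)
The plan is to re-run the proof of Theorem~\ref{meantrtime} with the queue-based activation rates $g_U(Q_u(t))=BQ_u(t)^{\beta}$, $g_V(Q_v(t))=B'Q_v(t)^{\beta'}$ replaced throughout by the constant rates $r^{\beta}$, $r^{\beta'}$, and to check that nothing breaks. The structural fact that makes this possible is that on every time horizon relevant to the statement --- which is $\mathcal{O}(r)$, since $T_U(r)\asymp r$ --- all queue lengths remain of order $r$: by a routine law-of-large-numbers estimate for the compound-Poisson input and the active time $T_w(\cdot)$, a queue at a node in $U$ (starting at $\gamma_U r$, decreasing at net rate $c-\rho_U$ only while that node is active) stays positive, hence of order $r$, up to time $\asymp\frac{\gamma_U}{c-\rho_U}r$, while a queue at a node in $V$ (starting at $\gamma_V r$) only grows over the transition. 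Hence $g_U(Q_u(t))\asymp r^{\beta}$ and $g_V(Q_v(t))\asymp r^{\beta'}$ throughout, so the two models have activation rates of the same order, and every order-of-magnitude quantity in the proof of Theorem~\ref{meantrtime} --- the normalising constant $Z$, the probabilities of the edge scenarios of Section~\ref{sec:graphevolution}, the decomposition of the transition into successive transitions over complete bipartite subgraphs (which uses only $\beta'>\beta+1$, not the form of the rates), and the randomised algorithm producing $d^*$ --- carries over unchanged.

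I would then verify this case by case. For fast dynamics (FD) and regular dynamics (RD) the horizon is $o(1)$, respectively $\mathcal{O}(1)$, so the queue lengths are frozen at their initial values and $g_U,g_V$ differ from $r^{\beta},r^{\beta'}$ only by the constants $B,B'$; the mechanism (the edge dynamics disconnects some $v\in V$ from $U$ in time $\asymp\lambda(r)^{-1}$, after which the aggressive $v$ fires in negligible time) is identical and the conclusion $\asymp\lambda(r)^{-1}$ is insensitive to prefactors. For competitive slow dynamics (SDc) in regimes (I)--(II), and in regime (III) with $0<\alpha\le 1$, the horizon is $r^{\alpha}\le r$, so queue lengths stay of order $r$; the race between ``an edge incident to $v$ disappears'' (rate $\asymp\lambda(r)$) and ``the neighbours of $v$ are jointly inactive long enough for $v$ to fire'' (the static mechanism, of time-scale $\asymp r^{1\wedge\beta(d^*-1)}\succeq\lambda(r)^{-1}$) is decided exactly as in Theorem~\ref{meantrtime}, giving $\asymp\lambda(r)^{-1}=r^{\alpha}$, and $\mathcal{T}_{G(\cdot)}^{Q^0}<T_U(r)$ with probability tending to $1$ since $r^{\alpha}=o(r)$ and $T_U(r)\asymp r$.

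The genuinely different case is the static supercritical mechanism: regime (III) with $\alpha>1$, and (SDnc). There $\lambda(r)\,r=r^{1-\alpha}\to 0$, so with high probability no edge of $G(0)$ flips before time of order $r$, the graph is effectively static, and the transition is governed by the queue lengths at $U$ running out; this is the one place where a constant rate $r^{\beta}$ behaves differently from $g_U(Q)$, which vanishes as $Q\downarrow 0$. Rather than redo the analysis I would invoke Theorem~\ref{adaptedthmpaper1}(III) and Theorem~\ref{adaptedthmpaper2}(III), which already establish that in the fixed-rate model the supercritical transition time equals $\frac{\gamma_U}{c-\rho_U}r\,[1+o(1)]=T_U(r)\,[1+o(1)]$, and feed this value of $T_U(r)$ into the dynamic argument; likewise (SDnc) reduces to Theorem~\ref{adaptedthmpaper2} in place of Theorem~3.3 of \cite{BdHNS20}. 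I expect the main obstacle to be the bookkeeping: one must re-read the lengthy proof of Theorem~\ref{meantrtime} and confirm, step by step, that the queue-based rates entered only through their order $\asymp r^{\beta},r^{\beta'}$ on horizons $\mathcal{O}(r)$, and in particular that the vanishing of $g_U$ near $0$ is used nowhere except in the static supercritical estimate --- the very step that is delegated to Theorems~\ref{adaptedthmpaper1}--\ref{adaptedthmpaper2}.
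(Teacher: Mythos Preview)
Your proposal is correct and follows the same strategy as the paper: re-run the proof of Theorem~\ref{meantrtime} with fixed rates in place of queue-based rates, observing that on the relevant time horizons the two models have activation rates of the same order, and delegate the static supercritical and non-competitive cases to the already-proved Theorems~\ref{adaptedthmpaper1}--\ref{adaptedthmpaper2}. The paper's own proof is a three-sentence sketch to exactly this effect (invoking Theorem~\ref{adaptedthmpaper2} and Proposition~\ref{NvsD}); your version simply fills in the case-by-case bookkeeping and makes explicit the queue-length heuristic that keeps $g_U,g_V$ of order $r^{\beta},r^{\beta'}$ throughout.
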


\subsection{Discussion and related work}
\label{sectiondiscussion}

{\bf Intuition}. The intuition behind the results is that a node in $V$ can activate for two reasons. It can activate when its neighbors are simultaneously inactive or when there are no edges connecting it to nodes in $U$. Interpolation between these two situations gives rise to different cases, which mainly depend on the speed of the dynamics. In the case of competitive dynamics, we are able to distinguish between different behaviors for the mean transition time by analyzing the subcritical, critical and supercritical regimes separately. To summarize, with high probability as $r \to \infty$, the order of activation of nodes in $V$ follows an admissible path until the edge dynamics of rate $\lambda(r)$ become competitive. The competition begins on time scale $\lambda(r)^{-1}$, the time scale on which all the remaining nodes in $V$ activate, if there are any, and the transition occurs.\\

\noindent
{\bf Assumptions}.
The assumption $\beta' > \beta + 1$ in Assumptions~\ref{queuedependentrates} and \ref{fixedrates} ensures that the transition on an arbitrary bipartite graph can be decomposed into a succession of transitions on complete bipartite subgraphs. Indeed, it has been shown in \cite[Lemma 2.2]{BdHNS20} that when a node in $V$ activates, it "blocks`` all its neighbors in $U$, in the sense that with high probability they will remain inactive for the rest of the time. Consider an inactive node $u \in U$ and note that it could activate again only when all its neighbors in $V$ are simultaneously inactive. In the worst-case scenario where $u$ has only one active neighbor $v$, they will compete with each other for the entire duration of the transition. Our assumption guarantees that with high probability $u$ will never win any competition against $v$, and hence will remain blocked. If we only assume $\beta' > \beta$, there could be nodes in $U$ activating again over time, which would lead to a much more challenging model to analyze.\\

\noindent
{\bf Activation rates and related work}.
Most of the literature refers to models where the activation rates are fixed parameters and the underlying Markov process is time-homogeneous (see \cite{dHNT18}, \cite{NZB16}, \cite{Z17}, \cite{Z18}). In this setting it has been shown that the transition time from the metastable to the stable state is approximately exponential on the scale of its mean. The main idea is to consider the return times to the metastable state of the discrete-time embedded Markov chain as regeneration times. At each regeneration time a Bernoulli trial is conducted. The trial is successful if the stable state is reached before a return to the metastable state occurs, while it is unsuccessful otherwise. In the asymptotic regime, the success probability of each trial is small and the expected length of a single trial is negligible compared to the expected transition time. It is known that the first success time of a large number of trials, each having a small probability of success, is approximately exponentially distributed (see \cite{FMNS15}, \cite{K79}).

Attention has also been paid to models where the activation rates are deterministic functions of time. The underlying Markov process is therefore time-inhomogeneous, and it has been shown that, under appropriate conditions, the transition time is approximately exponential with a non-constant rate (see \cite{BdHNT19}). The above-described approach is still fruitful, but the success probability and the length of each trial now depend on its
starting time. 

Recently, various models for random-access networks with queue-based protocols have been investigated (see \cite{B13}, \cite{CBMSW21}, \cite{DP18}, \cite{DP19}, \cite{NTS10}, \cite{SS10}). Note that, since the transition rates depend on time only via the current state of the vector, $({\bf X}(t), {\bf Q}(t))_{t \geq 0}$ for static networks and $({\bf X}(t), {\bf Q}(t), {\bf Y}(t))_{t \geq 0}$ for dynamic networks are time-homogeneous Markov processes with state space $\mathcal{X} \times \mathbb{R}_{\geq 0}^N$ and $\mathcal{X} \times \mathbb{R}_{\geq 0}^N \times \mathcal{Y}$, respectively. The state-dependent nature of the activation rates creates challenging problems, whose solution requires a new methodological approach in order to analyze how the node activity interacts with the queue lengths and eventually with the edge dynamics. The stationary distributions of these Markov processes, in general, do not admit a closed-form expression and even the basic throughput characteristics and stability conditions are not in general known. It is not simple to describe explicitly the stability condition for general network topologies (see \cite{vdVBvLP10}) and only structural representations or asymptotic results are known (see \cite{CBvL14}, \cite{LK13}). For suitable activation rate functions, it has been shown that queue-based protocols are capable to achieve maximum stability and the optimal throughput performance of centralized scheduling algorithms (see \cite{GS10}, \cite{JSSW10}, \cite{RSS09}, \cite{SS12}). However, they tend to lead to long queues and delays (see \cite{BCvL14}, \cite{GBW14}). For these reasons, analyzing the delay performance of queue-based random-access protocols has recently attracted great attention.


\section{The edge dynamics}
In this section we analyze the time it takes for the nodes in $V$ to be disconnected from $U$ and we show how different types of dynamics can slow down or speed up their activation. The goal of the section is Proposition~\ref{NvsD}, which shows how the activation of each node in $V$ is characterized by the competition between its nucleation and the effects of the dynamics. Proposition~\ref{NvsD} is crucial to prove our main results in Section~\ref{proofs}.

\subsection{Disconnection time}

Due to the dynamics, a node $v \in V$ can activate if at some point there are no edges connecting it to nodes in $U$. Indeed, the dynamics might take the graph to a configuration where the degree of $v$ is temporarily $0$, so that $v$ can activate as soon as its clock ticks.

\begin{definizione}[\bf {Disconnection time}]
Given $v \in V$, we call \textit{disconnection time} of $v$ the time it takes for $v$ to be disconnected from $U$, i.e., to have all possible edges connecting it to $U$ simultaneously absent. We denote by $D_v^{Q^0}$ the disconnection time of $v$ conditional on the initial queue lengths ${\bf Q^0}$.
\end{definizione}

As introduced in Section~\ref{ss:model}, the dynamics affect the graph by allowing the edges to appear and disappear according to a Poisson clock with rate $\lambda(r)$. The alternation between the states of each edge  $e \in E_{MN}$ is described by an exponential random variable $S_e \simeq \mathrm{Exp}(\lambda(r))$ with mean $\mu(r) = \lambda(r)^{-1}$. Note that, with high probability as $r \to \infty$, $S_e$ takes values of the order of its mean, i.e., $S_e \asymp \mu(r)$. Indeed, if we pick $x \prec \mu(r)$, then
\begin{equation}
\label{wn3:Deorder}
\lim_{r \to \infty} \mathbb{P} (S_e \leq x) = \lim_{r \to \infty} 1-e^{-\lambda(r) x} = 0, 
\end{equation}
and the same holds for $x \succ \mu(r)$. In other words, if an edge is absent at time $t$, then, with high probability as $r \to \infty$, it will take an amount of time of order $\mu(r)$ for the Poisson clock to tick and for the edge to become present. Vice versa, if an edge is present at time $t$, then it will take an amount of time of order $\mu(r)$ for the edge to become absent.

The arbitrary bipartite initial configuration of the graph plays an important role in understanding the transition time. Consider a node in $v \in V$ of initial degree $d_v(0)  = d >0$. Since $|U| = M$, there are $M$ possible total edges connecting $v$ to $U$. We construct a continuous-time Markov chain $\mathcal{M}$ where each state $k$ represents the set of configurations of the $M$ edges in which $k$ edges are present and $M-k$ edges are absent. State 0 corresponds to all edges being absent, state 1 corresponds to the $M$ possible configurations with exactly one edge present, and so on (see Figure~\ref{fig:Mc} below).

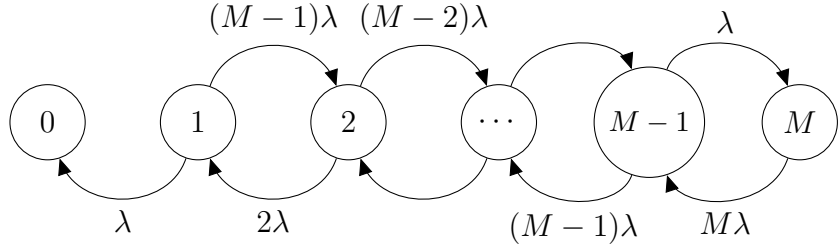
\begin{figure}[htbp]
\begin{center}
\begin{tikzpicture}[>=triangle 45, scale = 0.8]
 \node (zero) [draw, circle, minimum size=1cm] at (-2.5,1cm) {$0$};
 \node (one) [draw, circle, minimum size=1cm] at (0,1cm) {$1$};
 \node (two) [draw, circle, minimum size=1cm] at (2.5,1cm) {$2$};
 \node (dots) [draw, circle, minimum size=1cm] at (5,1cm) {$\cdots$};
 \node (emminusone) [draw, circle, minimum size=1cm] at (7.5,1cm) {\small{$M-1$}};
 \node (en) [draw, circle, minimum size=1cm] at (10,1cm) {$M$};
 \draw [->] (one) .. controls +(0.5,1.5) and +(-0.5,1.5) .. node [midway, above] {$(M-1) \lambda$} (two);
 \draw [->] (two) .. controls +(0.5,1.5) and +(-0.5,1.5) .. node [midway, above] {$(M-2) \lambda$} (dots);
  \draw [->] (dots) .. controls +(0.5,1.5) and +(-0.5,1.5) .. node [midway, above] {} (emminusone);
 \draw [->] (emminusone) .. controls +(0.5,1.5) and +(-0.5,1.5) .. node [midway, above] {$\lambda$} (en);
 \draw [->] (en) .. controls +(-0.5,-1.5) and +(0.5,-1.5) .. node [midway, below] {$M \lambda$} (emminusone);
 \draw [->] (emminusone) .. controls +(-0.5,-1.5) and +(0.5,-1.5) .. node [midway, below] {$(M-1) \lambda$} (dots);
  \draw [->] (dots) .. controls +(-0.5,-1.5) and +(0.5,-1.5) .. node [midway, below] {} (two);
 \draw [->] (two) .. controls +(-0.5,-1.5) and +(0.5,-1.5) .. node [midway, below] {$2 \lambda$} (one);
 \draw [->] (one) .. controls +(-0.5,-1.5) and +(0.5,-1.5) .. node [midway, below] {$\lambda$} (zero);
\end{tikzpicture}
\end{center}
\caption{{\small The Markov chain $\mathcal{M}$ describing how the edge dynamics governed by $\lambda = \lambda(r)$ change the degree of a node in $V$. It is a birth-death process with $M$ transient states and one absorbing state.}}
\label{fig:Mc}
\end{figure}

We consider state 0 as an absorbing state, since we are interested in computing the hitting times to state 0 starting from any other state. From state $M$ we can only jump to state $M-1$, when one of the $M$ present edges disappears, which happens with rate $M \lambda(r)$. From each state $0 < k < M$ we jump to the neighboring states also with rate $M \lambda(r)$. Indeed, as soon as the clock of one of the $M$ possible edges ticks, we jump to the state $k+1$ if the edge was absent and becomes present, while we jump to the state $k-1$ if the edge was present and becomes absent. Hence, we jump from state $k$ to state $k+1$ with probability $\frac{M-k}{M}$, while we jump from state $k$ to state $k-1$ with probability $\frac{k}{M}$. 

The transition rate matrix $H(r)$ of the Markov chain $\mathcal{M}$ is given by
\renewcommand{\kbldelim}{(}
\renewcommand{\kbrdelim}{)}
\begin{equation}
H(r)= \kbordermatrix{
      & 0 & 1 & 2 & \cdots & M-1 & M \\
   0 & 0 & 0 & 0 & 0 & 0 & 0 \\
   1 & \lambda(r) & -M\lambda(r)  & (M-1) \lambda(r)  & 0 & 0 & 0 \\
   2 & 0 & 2 \lambda(r) & -M \lambda(r) & \cdots & 0 & 0 \\
   \vdots & 0 & 0 & \cdots & \cdots  & \cdots & 0  \\
   M-1 & 0 & 0 & 0 & \cdots & -M \lambda(r) & \lambda(r) \\
   M & 0 & 0 & 0 & 0 & M \lambda(r) & -M \lambda(r) \\
} 
\end{equation}
and can be written as 
\begin{equation}
\label{matrixS}
H(r) = \begin{pmatrix}
0 & {\bf 0} \\
{\bf S^0}(r) & S(r) \\
\end{pmatrix},
\end{equation}
where $S(r)$ is an $M \times M$ matrix and ${\bf S^0}(r) = -S(r) {\bf 1}_M$, where ${\bf 1}_M$ represents the $M$-dimensional column vector with every element being 1. Let 
\begin{equation}
\label{vectora}
 (a_0, {\bf a}) = (a_0, a_1, \ldots, a_M)
\end{equation}
be the $(M+1)$-dimensional row vector describing the probability of starting in one of the $M+1$ states. Since $d_v(0) = d$, we have that the $d$-th entry of ${\bf a}$ equals 1 and all the other entries equal 0. Computing the disconnection time of a node with initial degree $d$ is equivalent to computing the hitting time of the Markov chain $\mathcal{M}$ to state 0 starting from state $d$.

\begin{lemma}[{\bf Mean and law of the disconnection time}]
\label{disconnectiontime}
Consider a node $v \in V$ of initial degree $d_v(0) = d > 0$, and let the edge dynamics be such that $\mathbb{E}_{1_U}[S_e] = \mu(r)$ for each $e \in E_{MN}$. 
\begin{itemize}
\item[(i)] The disconnection time $D_v^{Q^0}$ satisfies
\begin{equation}
\mathbb{E}_{1_U} [D_v^{Q^0}] = C_d(M) \, \mu(r),
\end{equation}
with
\begin{equation}
C_d(M) = \sum_{k=1}^d \sum_{n=0}^{M-k}\frac{(M-k)!(k-1)!}{n! (M-n)!}.
\end{equation}

\item[(ii)] The law of the disconnection time $D_v^{Q^0}$ follows a phase-type distribution $\mathrm{PH}({\bf a}, S(r))$ and is given by 
\begin{equation}
\mathbb{P}_{1_U}(D_v^{Q^0} > x) = {\bf a} \, \mathrm{exp}(S(r)x)  {\bf 1}_M, \qquad x \in (0, \infty),
\end{equation}
where ${\bf a}$ and $S(r)$ are as in \eqref{vectora} and \eqref{matrixS}, respectively. In particular, the above probability equals the sum of the entries in $d$-th row of the matrix $\mathrm{exp}(S(r)x)$.
\end{itemize}
\end{lemma}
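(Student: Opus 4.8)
The plan is to treat the disconnection time as an absorption time of the finite-state Markov chain $\mathcal{M}$ and exploit its birth–death structure. First I would observe that, by the construction in Section~\ref{sec:graphevolution}, the $M$ potential edges between $v$ and $U$ each carry an independent rate-$\lambda(r)$ clock, so the number of present edges among these $M$ is exactly the process $\mathcal{M}$: from state $k$ with $0<k\le M$ the total rate of a clock ticking is $M\lambda(r)$, and conditionally on a tick the moved edge is present (resp.\ absent) with probability $k/M$ (resp.\ $(M-k)/M$), giving downward rate $k\lambda(r)$ and upward rate $(M-k)\lambda(r)$; from state $M$ only a down-move is possible, at rate $M\lambda(r)$. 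Hence $D_v^{Q^0}$ conditional on $d_v(0)=d$ is the hitting time of state $0$ from state $d$, and the generator restricted to the transient states $\{1,\dots,M\}$ is precisely $S(r)$ with ${\bf S}^0(r)=-S(r){\bf 1}_M$ the exit-rate vector into the absorbing state. This immediately gives part (ii): by the standard theory of phase-type distributions (absorption time of a finite CTMC), with initial distribution ${\bf a}$ on the transient states one has $\mathbb{P}_u(D_v^{Q^0}>x)={\bf a}\,\exp(S(r)x)\,{\bf 1}$, and since ${\bf a}$ is the $d$-th unit vector this is exactly the sum of the entries of the $d$-th row of $\exp(S(r)x)$.

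For part (i), I would compute $\mathbb{E}_u[D_v^{Q^0}]$ by a first-step/harmonic-function argument tailored to birth–death chains rather than by inverting $S(r)$. Write $m_k$ for the mean absorption time into $0$ starting from state $k$, $k=1,\dots,M$, so that the quantity we want is $m_d$. The standard birth–death recursion with birth rate $b_k=(M-k)\lambda(r)$ and death rate $\delta_k=k\lambda(r)$ (and $b_M=0$) yields, telescoping from the absorbing end,
\begin{equation}
m_k=\sum_{j=1}^{k}\frac{1}{\delta_j}\Bigl(1+\sum_{i=j}^{M-1}\prod_{\ell=j}^{i}\frac{b_\ell}{\delta_{\ell+1}}\Bigr),
\end{equation}
and substituting $b_\ell/\delta_{\ell+1}=(M-\ell)/(\ell+1)$ collapses the inner product into a ratio of factorials. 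After reindexing (the inner sum over $i$ turning into a sum over $n=0,\dots,M-k$ of terms $\tfrac{(M-k)!(k-1)!}{n!(M-n)!}$) one obtains $m_d=C_d(M)\mu(r)$ with $C_d(M)$ as stated, using $\mu(r)=\lambda(r)^{-1}$ to absorb the $1/\lambda(r)$ factors. Alternatively, and perhaps more cleanly, one can decompose $D_v^{Q^0}$ as the sum over $k=1,\dots,d$ of the time to go from first reaching level $k$ down to first reaching level $k-1$, use the strong Markov property, and compute each such one-level-down mean time via the scale-function / Green's-function formula for the birth–death chain; either route leads to the same double sum.

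The main obstacle is purely bookkeeping: carrying out the telescoping of the birth–death recursion and then massaging the resulting nested sums of products $\prod (M-\ell)/(\ell+1)$ into the clean binomial-type expression $\sum_{k=1}^{d}\sum_{n=0}^{M-k}\tfrac{(M-k)!(k-1)!}{n!(M-n)!}$. One must be careful with the boundary at state $M$ (no up-move, so $b_M=0$ and the inner sum truncates correctly) and with the fact that all rates scale by $\lambda(r)$, so that $\lambda(r)$ enters only through the overall factor $\mu(r)$ and $C_d(M)$ is a pure combinatorial constant independent of $r$. No probabilistic subtlety arises beyond the elementary CTMC facts (finiteness of $\mathcal{M}$ guarantees $\mathbb{E}_u[D_v^{Q^0}]<\infty$ and validity of the phase-type representation), so once the algebraic identity is verified the lemma follows.
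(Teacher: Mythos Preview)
Your proposal is correct and follows essentially the same route as the paper: for (ii) both you and the paper simply invoke the standard phase-type representation of the absorption time of a finite CTMC, and for (i) both reduce to the birth--death first-step recursion, with the paper carrying it out explicitly via the difference variables $X_k=x_k-x_{k-1}$ (exactly your ``alternative'' one-level-down decomposition) and solving the recursion $X_k=\tfrac{M-k}{k}X_{k+1}+\tfrac{\mu(r)}{k}$ backward from $X_M=\mu(r)/M$ to obtain the stated double sum. The only difference is presentational: you quote the general birth--death hitting-time formula and then reindex, whereas the paper rederives it from scratch.
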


\begin{proof}
We prove the two statements separately.
\begin{itemize}
\item[(i)] Consider the Markov chain $\mathcal{M}$ described above. We know that from each state $k = 1, \dots, M$, we jump to a neighboring state with rate $M \lambda(r)$. The jump occurs exactly when the first of the $M$ possible edges changes its state. This corresponds to the minimum of $M$ i.i.d.\ exponential random variables, which is known to follow an exponential distribution with mean $\frac{\mu(r)}{M}$. 
If we denote by $x_k$ the mean hitting times of state $0$ starting from state $k$ and we set $x_0 = 0$, the vector $(x_1, \dots, x_M)$ satisfies the system of equations
\begin{equation}
\label{wn3:eqsystem}
\left\{\begin{array}{lcl}
x_1 &=& \frac{1}{M} \frac{\mu(r)}{M} + \frac{M-1}{M} \bigg(\frac{\mu(r)}{M} + x_2 \bigg) \\
x_2 &=& \frac{2}{M} \bigg( \frac{\mu(r)}{M} + x_1 \bigg) + \frac{M-2}{M} \bigg( \frac{\mu(r)}{M} + x_3 \bigg) \\
\cdots &=& \cdots \\
x_{M-1} &=& \frac{M-1}{M} \bigg( \frac{\mu(r)}{M} + x_{M-2} \bigg) + \frac{1}{M} \bigg(\frac{\mu(r)}{M} + x_M \bigg) \\
x_M &=& \frac{\mu(r)}{M} + x_{M-1},
\end{array}
\right.
\end{equation}
This allows us to compute the mean disconnection time of $v$ given its initial degree $d$. Note that for $k=1, \dots, M-1$, by writing $x_k = \frac{k}{M} x_k + \frac{M-k}{M} x_k$, we can rewrite the $k$-th equation as
\begin{equation}
\frac{k}{M} (x_k-x_{k-1}) =  \frac{M-k}{M} (x_{k+1}-x_k) + \frac{\mu(r)}{M}.
\end{equation}
If we let $X_k = x_k - x_{k-1}$ for each $k = 1, \dots, M$, each of the first $M-1$ equations can be rewritten in terms of the difference variables $X_k$ and $X_{k+1}$ as
\begin{equation}
X_k = \frac{M-k}{k} X_{k+1} + \frac{\mu(r)}{k}
\end{equation}
and the last equation becomes
\begin{equation}
X_M = \frac{\mu(r)}{M}.
\end{equation}
Starting from the initial value $X_M = \frac{\mu(r)}{M}$, we can then determine in a recursive fashion the values of $X_k$ for each $k = 1, \dots, M-1$, which are given by
\begin{equation}
X_k = \sum_{n=0}^{M-k}\frac{(M-k)!(k-1)!}{n! (M-n)!} \mu(r).
\end{equation}
By noticing that $X_1 = x_1-x_0 = x_1$, we can recover the values of the original variables $x_2 = X_2+x_1 = X_2 + X_1$, $x_3=X_3 +x_2 = X_3+X_2+X_1$, and so on. We then have that
\begin{equation}
x_d = \sum_{k=1}^d X_k = \sum_{k=1}^d \sum_{n=0}^{M-k}\frac{(M-k)!(k-1)!}{n! (M-n)!} \mu(r),
\end{equation}
which gives us the mean disconnection time of $v$ given its initial degree $d$.

\item[(ii)] The disconnection time of a node $v \in V$ of initial degree $d > 0$ is the hitting time of state 0 of the Markov chain $\mathcal{M}$ starting from state $d$. The distribution of the hitting time to the unique absorbing state, starting from any of the other finite transient states, is said to be phase-type and is denoted by $\mathrm{PH}({\bf a}, S(r))$, with ${\bf a}$ and $S(r)$ as in \eqref{vectora} and \eqref{matrixS}, respectively.

The distribution function of $D_v^{Q^0}$ is given by
\begin{equation}
\mathbb{P}_{1_U}(D_v^{Q^0} \leq x ) = \int_0^x \mathcal{P}(y) \,dy = 1 - {\bf a} \, \mathrm{exp}(S(r)x) \, {\bf 1}_M, \qquad x \in (0, \infty),
\end{equation}
where $\mathrm{exp}(\cdot)$ indicates the matrix exponential, and  
\begin{equation}
\mathcal{P}(z) = {\bf a} \, \mathrm{exp}(S(r)z) \, {\bf S^0}(r), \qquad z \in (0, \infty),
\end{equation}
with ${\bf S^0}(r)$ as in \eqref{matrixS}. Since the vector ${\bf a}$ has its $d$-th entry equal to 1 and all the other entries equal to 0, we have that the product ${\bf a} \, \mathrm{exp}(S(r)x) \, {\bf 1}_M$ equals the sum of the entries in the $d$-th row of the matrix $\mathrm{exp}(S(r)x)$. 
\end{itemize}
\end{proof}

We next show that, with high probability as $r \to \infty$, the disconnection time $D_v^{Q^0}$ takes values of the order of its mean.
\begin{lemma}[\bf Values of the disconnection time]
\label{phvalues}
Consider a node $v \in V$ of initial degree $d_v(0) = d > 0$, and let the edge dynamics be such that $\mathbb{E}_{1_U}[S_e] = \mu(r)$ for each $e \in E_{MN}$. With high probability as $r \to \infty$, the disconnection time $D_v^{Q^0}$ takes values of order $\mu(r)$, i.e., if $x=x(r) \prec \mu(r)$, then
\begin{equation}
\lim_{r \to \infty} \mathbb{P}(D_v^{Q^0} \leq x) = 0,
\end{equation}
and, if $x=x(r) \succ \mu(r)$, then
\begin{equation}
\lim_{r \to \infty} \mathbb{P}(D_v^{Q^0} \geq x) = 0.
\end{equation}
\end{lemma}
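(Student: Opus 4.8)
The plan is to reduce the statement to a concentration estimate for the phase-type random variable $D_v^{Q^0}$, exploiting the fact that the only $r$-dependence in the Markov chain $\mathcal{M}$ enters through the overall time scale $\mu(r) = \lambda(r)^{-1}$. First I would observe that $\mathcal{M}$ can be written as a fixed (i.e., $r$-independent) birth–death jump chain run on a clock that ticks at rate $M\lambda(r)$: every holding time in a transient state $k \in \{1,\dots,M\}$ is exponential with mean $\mu(r)/M$, and the embedded jump probabilities $\tfrac{M-k}{M}$, $\tfrac{k}{M}$ do not depend on $r$. Hence $D_v^{Q^0} \stackrel{d}{=} \mu(r)\, W$, where $W$ is the hitting time of state $0$ from state $d$ for the chain $\mathcal{M}$ with $\lambda = 1$; equivalently, $W = \tfrac{1}{M}\sum_{j=1}^{J} \xi_j$, with $J$ the (a.s.\ finite, $r$-independent) number of jumps before absorption and $(\xi_j)$ i.i.d.\ $\mathrm{Exp}(1)$ independent of $J$. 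The key point is that $W$ is a \emph{fixed}, non-degenerate, a.s.\ positive and finite random variable, not depending on $r$.

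Given this, the two limits follow from elementary facts about $W$. For the lower bound: fix $x \prec \mu(r)$, so $x/\mu(r) \to 0$, and
\begin{equation}
\mathbb{P}\big(D_v^{Q^0} \leq x\big) = \mathbb{P}\big(W \leq x/\mu(r)\big) \longrightarrow \mathbb{P}(W = 0) = 0,
\end{equation}
since $W > 0$ almost surely (the chain must take at least $d \geq 1$ jumps, so $W \geq \tfrac{1}{M}\xi_1 > 0$ a.s.) and $x/\mu(r)\downarrow 0$. For the upper bound: fix $x \succ \mu(r)$, so $x/\mu(r) \to \infty$, and
\begin{equation}
\mathbb{P}\big(D_v^{Q^0} \geq x\big) = \mathbb{P}\big(W \geq x/\mu(r)\big) \longrightarrow \mathbb{P}(W = \infty) = 0,
\end{equation}
because $W < \infty$ almost surely — state $0$ is reached with probability one from any transient state (the chain $\mathcal{M}$ has a unique absorbing state and is a finite-state chain, so absorption is certain), and $\mathbb{E}[W] = C_d(M)/\lambda|_{\lambda=1} < \infty$ is finite by Lemma~\ref{disconnectiontime}(i), so $W$ is finite a.s. Alternatively, one can invoke the exponential tail of phase-type distributions: $\mathbb{P}(W \geq t) \leq c\, e^{-\eta t}$ for constants $c, \eta > 0$ depending only on $S(1)$, which makes the convergence quantitative and also re-derives the lower bound via a small-deviations estimate for $W = \tfrac1M\sum_{j\le J}\xi_j$.

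The main obstacle — and it is a mild one — is making precise that the distribution of $W$ is genuinely $r$-independent; this is exactly the time-scaling remark above, and it is the reason the hypothesis is stated as $\mathbb{E}_u[S_e] = \mu(r)$ rather than in terms of a specific $\lambda(r)$. Once the scaling $D_v^{Q^0} \stackrel{d}{=} \mu(r) W$ is in hand, nothing further about the structure of $\mathcal{M}$ is needed beyond $0 < W < \infty$ a.s., which is immediate. One should also note that this argument is the exact analogue, at the level of the disconnection time, of the one-edge estimate \eqref{wn3:Deorder} for $S_e$, and indeed reduces to it when $d = M = 1$.
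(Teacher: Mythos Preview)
Your argument is correct. Both your proof and the paper's hinge on the same observation---that the $r$-dependence in $\mathcal{M}$ enters only as a global time scale, i.e., $S(r) = \lambda(r) S'$ with $S'$ fixed---but you exploit it probabilistically while the paper exploits it linear-algebraically. The paper works directly with the matrix exponential $\exp(S(r)x) = \exp\bigl(\tfrac{x}{\mu(r)}S'\bigr)$: for $x \prec \mu(r)$ it expands the series to get $\exp\bigl(\tfrac{x}{\mu(r)}S'\bigr) \to I_M$, and for $x \succ \mu(r)$ it argues that $S'$ is irreducibly diagonally dominant (hence non-singular, via \cite[Theorem~1.21]{V00}) and then invokes \cite[Section~2.4]{LR99} to conclude $e^{tS'} \to 0_M$ as $t \to \infty$. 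Your route sidesteps this machinery entirely: the distributional identity $D_v^{Q^0} \stackrel{d}{=} \mu(r)\,W$ with $W$ a fixed, a.s.\ positive and finite random variable reduces both limits to continuity of the distribution function of $W$ at $0$ and at $\infty$. This is more elementary and more transparent, and it makes clear that nothing about the fine structure of $\mathcal{M}$ is needed beyond absorption being certain. The paper's approach, on the other hand, stays closer to the phase-type formalism of Lemma~\ref{disconnectiontime}(ii) and would generalise more directly if one wanted explicit rates of convergence in terms of the spectrum of $S'$.
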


\begin{proof}
Recall that by Lemma~\ref{disconnectiontime}(ii) we have $\mathbb{P}(D_v^{Q^0} \leq x) = 1 - {\bf a} \, \mathrm{exp}(S(r)x) \, {\bf 1}_M$, where ${\bf a} \, \mathrm{exp}(S(r)x) \, {\bf 1}_M$ equals the sum of the entries in the $d$-th row of the matrix $\mathrm{exp}(S(r)x)$. Note that we can write $S(r) = \lambda(r) S' = \frac{1}{\mu(r)} S'$, where $S'$ is a tridiagonal $M \times M$ matrix. If we pick $x \prec \mu(r)$, by expanding the matrix exponential and taking the limit, we have
\begin{equation}
\lim_{r \to \infty} e^{S(r)x} = \lim_{r \to \infty} e^{\frac{x}{\mu(r)} S'} = \lim_{r \to \infty} \sum_{n =0}^{\infty} \frac{1}{n!} \bigg(\frac{x}{\mu(r)} S'\bigg)^n = I_M,
\end{equation}
where $I_M$ indicates the $M \times M$ identity matrix. Hence the sum of the entries in the $d$-th row is $1$ and we have
\begin{equation}
\lim_{r \to \infty} \mathbb{P}(D_v^{Q^0} \leq x) = \lim_{r \to \infty} 1 - {\bf a} \, \mathrm{exp}(S(r)x) \, {\bf 1}_M = 1-  1 = 0.
\end{equation}
In order to deal with the case $x \succ \mu(r)$, we need some extra definitions. An $M \times M$ matrix $A=(a_{ij})_{i,j = 1, \dots, M}$ is diagonally dominant if for each row $i = 1, \dots, M$, we have
\begin{equation}
\label{dd}
|a_{ii}| \geq \sum_{j \neq i} |a_{ij}|. 
\end{equation}
Moreover, $A$ is irreducibly diagonally dominant if it is irreducible and diagonally dominant, with strict inequality holding in \eqref{dd} for at least one row. Note that the matrix $S'$ is irredicubly diagonally dominant, since the Markov chain is irreducible, the first row satisfies the strict inequality in \eqref{dd} and all the other rows sums up to $0$. It follows that $S'$ is non-singular, hence invertible (see \cite[Theorem 1.21]{V00}). We can now use the fact that if $S'$ is invertible, then
\begin{equation}
\lim_{t \to \infty} e^{t S'}= 0_M,
\end{equation}
where $0_M$ is the $M \times M$ matrix with all entries equal to zero (see \cite[Section 2.4]{LR99}). If we pick $x \succ \mu(r)$, we can write $t = \frac{x}{\mu(r)}$ and we have that
\begin{equation}
\lim_{r \to \infty} e^{S(r)x} = \lim_{r \to \infty} e^{\frac{x}{\mu(r)} S'}= 0_M.
\end{equation}
Hence the sum of the entries in the $d$-th row is $0$ and we have
\begin{equation}
\lim_{r \to \infty} \mathbb{P}(D_v^{Q^0} \geq x) = \lim_{r \to \infty} {\bf a} \, \mathrm{exp}(S(r)x) \, {\bf 1}_M = 0.
\end{equation}
\end{proof}

\subsection{Nucleation vs.\ dynamics}
\label{ss:NvsD}

We start with the definition of forks and nucleation times, which play a crucial role in the analysis of the transition time. The term \textit{nucleation} is borrowed from statistical physics and refers to the first step in the spontaneous formation of a new structure starting from a metastable state. In our context, the new structure is represented by the state with a node in $V$ active and its neighbors in $U$ inactive.

\begin{definizione}[\bf{Forks and nucleation times}]
\label{nucleationdef}
Given a node $v \in V$, we refer to the {\it fork} of $v$ as the complete bipartite subgraph of $G$ containing only node $v$, its neighbors in $U$ and the edges between them. The time it takes the fork of $v$ to deactivate its nodes in $U$ and activate $v$ is called the {\it nucleation time} of the fork of $v$. We denote this time by $\mathcal{T}_v^Q$ when conditioning on the state of the queues ${\bf Q}$. 
\end{definizione}

In a similar way as the transition time, the nucleation times are random variables that depend on the state of the queues and on the activating forks, hence all the expectations should be interpreted as conditional expectations. 

Without loss of generality, we may consider graphs with no isolated nodes in $V$, since after time $o(1)$ we would be in such a scenario anyway. 

\begin{lemma}[{\bf Isolated nodes}]
\label{isolatednodes}
Nodes in $V$ with initial degree 0 activate in time $o(1)$ as $r \to \infty$.
\end{lemma}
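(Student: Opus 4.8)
The plan is to show that a node $v \in V$ with $d_v(0)=0$ activates within time $o(1)$ as $r\to\infty$, by bounding its activation time from above by an exponential random variable whose rate diverges. First I would observe that if $v$ has no neighbours at time $0$, then its active degree is $\tilde d_v(0)=0$, and the only way $v$ can acquire an active neighbour is through an edge appearing between $v$ and some active node $u\in U$; at time $0$ all of $U$ is active (we start from $1_U$), so such an edge-appearance event does have positive rate. Hence I would compare two competing clocks: the activation clock of $v$, which at time $t$ ticks at rate $g_V(Q_v(t))=B'Q_v(t)^{\beta'}$, and the Poisson clocks on the (at most $M$) absent edges incident to $v$, each ticking at rate $\lambda(r)$. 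I would argue that the queue length $Q_v(t)$ stays of order $r$ on the relevant (vanishing) time scale: since $Q_v(0)=\gamma_V r$ and the queue changes by a compound Poisson input of bounded mean rate, on a time interval of length $o(1)$ we have $Q_v(t)=\gamma_V r\,[1+o(1)]$ with high probability. Therefore the activation rate of $v$ is of order $r^{\beta'}$, which in particular diverges.

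Next I would make the domination precise. Let $\tau$ be the first time the activation clock of $v$ ticks while $v$ still has active degree $0$. On the event that no incident edge appears before $\tau$, node $v$ activates at time $\tau$. Since the activation clock of $v$ ticks at rate at least $c_1 r^{\beta'}$ for some constant $c_1>0$ on the time scale considered, and the competing edge-appearance events occur at total rate at most $M\lambda(r)$, a first-to-ring argument gives: the probability that some incident edge appears before $v$'s activation clock rings is at most $M\lambda(r)/(M\lambda(r)+c_1 r^{\beta'})$, which tends to $0$ because $\lambda(r) = o(r^{\beta'})$ in all regimes considered (indeed $\lambda(r)\to\infty$ no faster than a power of $r$, or $\lambda(r)=O(1)$, or $\lambda(r)\to 0$, and $\beta'>\beta+1>1>0$). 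On this high-probability event the activation time of $v$ is stochastically dominated by an exponential random variable with rate $c_1 r^{\beta'}$, hence is $O(r^{-\beta'})=o(1)$ with high probability, and one also gets $\mathbb{E}_u[\cdot]=O(r^{-\beta'})=o(1)$ after a routine truncation to control the low-probability complement.

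The one subtlety I would be careful with is the possibility that, before $v$ activates, an edge appears and then the resulting active neighbour $u$ deactivates again, returning $v$ to active degree $0$; and more generally one must ensure the "bad" excursions do not accumulate. This is handled by the same comparison: even if an incident edge does appear, the node $u$ at the other end deactivates at rate $1$ (and cannot have been reactivated, since by \cite[Lemma 2.2]{BdHNS20} once a node in $V$ is active its $U$-neighbours stay blocked — but here we do not even need that, we only need that each bad excursion ends in time $O(1)$ while $v$'s activation clock rings at rate $\asymp r^{\beta'}$). Summing a geometric number of such excursions, each contributing time $O(1)$ but each occurring only with the vanishing probability above, still yields total time $o(1)$ with high probability. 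The main obstacle is thus purely bookkeeping: organizing the comparison so that the diverging activation rate $r^{\beta'}$ is shown to dominate the combined effect of the (at most $M$) slow edge clocks and the unit-rate deactivation clocks, uniformly over the $o(1)$ time window on which $Q_v(t)\asymp r$ can be guaranteed.
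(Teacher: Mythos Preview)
Your competing-clocks argument is clean and essentially matches the paper's treatment of the case $\lambda(r)\prec g_V(Q_v(0))\asymp r^{\beta'}$, but the step where you assert ``$\lambda(r)=o(r^{\beta'})$ in all regimes considered'' is not justified and is in general false. In the fast-dynamics regime (FD) the paper only assumes $\lambda(r)\to\infty$; nothing prevents $\lambda(r)=r^{\beta'+7}$, say. Your parenthetical ``$\lambda(r)\to\infty$ no faster than a power of $r$'' neither appears in the hypotheses nor implies $\lambda(r)=o(r^{\beta'})$. Once $\lambda(r)\succeq r^{\beta'}$, the probability $M\lambda(r)/(M\lambda(r)+c_1 r^{\beta'})$ that an incident edge appears before $v$'s activation clock rings is bounded away from $0$ (or tends to $1$), so your first-to-ring bound and the subsequent ``vanishing probability'' geometric-excursion argument both collapse.

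The paper closes this gap by an explicit case split on the relative sizes of $\lambda(r)$ and the activation rate. When $\lambda(r)\succ r^{\beta'}$, the edge process mixes on a time scale much shorter than the inter-ring time of $v$'s activation clock, so at each ring the $M$ incident edges are (asymptotically) in their stationary uniform configuration; hence each ring finds $v$ isolated with probability $\approx 2^{-M}$, and a geometric$(2^{-M})$ number of rings, each costing time $O(r^{-\beta'})$, gives activation in time $o(1)$. The boundary case $\lambda(r)\asymp r^{\beta'}$ is handled similarly via the return-to-state-$0$ time of the degree chain. To repair your proof you need either to add this case analysis, or to give a single argument that is uniform in $\lambda(r)$; the domination you wrote down is not.
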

\begin{proof}
Consider the situation where $\lambda(r) \prec g_V(0)$, i.e., the dynamics are slower than the average time it takes for the activation clock of nodes in $V$ to tick. Then a node $v \in V$ with initial degree 0 activates as soon as its clock ticks, hence in time $o(1)$. 
Next, consider the situation where the dynamics are very fast, $\lambda(r) \succ g_V(0)$. Then a node $v \in V$ with initial degree 0 might be blocked by some active neighbors in $U$ by the time its activation clock ticks for the first time. Recall that $|U| =M$ and note that there are $2^M$ possible configurations of edges connecting $v$ to $U$. Each time the activation clock of $v$ ticks, the probability of being in each of the possible configurations tends to the uniform probability $1/2^M$ as $r \to \infty$. Therefore, after a finite number of attempts, $v$ eventually activates. Since each tick of the activation clock of $v$ takes time $o(1)$, $v$ activates in time $o(1)$.
Lastly, consider the situation where $\lambda(r) \asymp g_V(0)$. If the activation clock of a node $v \in V$ with initial degree 0 ticks before any of its potential edges appear, then $v$ activates in time $o(1)$. Otherwise, each subsequent activation attempt will not be successful unless the edge configuration is such that $v$ has no neighbors. In other words, $v$ can activate only when the Markov chain describing how its degree changes over time is in state 0. In this case, $v$ activates with a probability that at time $t$ is given by $\frac{g_V(t)}{g_V(t) + M \lambda(r)} > 0$ as $r \to \infty$. Since $\lambda(r)^{-1} = o(1)$, by using similar arguments as in the proof of Lemma~\ref{disconnectiontime}, the time it takes for the Markov chain to return to state 0 when starting from state 0 is $o(1)$. Hence, $v$ has the chance to activate with positive probability every period of time $o(1)$. Therefore, after a finite number of attempts, $v$ eventually activates in time $o(1)$.
\end{proof}

We call \textit{activation time} of $v \in V$ the time it takes for $v$ to activate. Depending on the dynamics, this can be given either by its nucleation time $\mathcal{T}_v^{Q^0}$ or by its disconnection time $D_v^{Q^0}$. When the dynamics are fast enough, nodes in $V$ eventually activate because their clocks tick and no edges connect them to nodes in $U$. On the other hand, when the dynamics are particularly slow, it is more likely for nodes in $V$ to activate through the nucleation of its fork, and the network tends to behave as if the edges were frozen at the initial configuration. In between these two scenarios the dynamics are more interesting and, depending on the speed, we distinguish between different behaviors. Proposition~\ref{NvsD} below describes the competition between the nucleation and the dynamics.

\begin{proposizione}[{\bf Competition: nucleation vs.\ dynamics}]
\label{NvsD}
Let $v \in V$ be a node of minimum degree at time $t=0$, with $d_v(0) = d > 0$.
\begin{itemize}
\item[(i)] If $\lambda(r) \succ r^{-(1 \wedge \beta(d-1))}$, then, with high probability as $r \to \infty$, the activation time of $v$ is given by its disconnection time, i.e., 
\begin{equation}
\lim_{r \to \infty} \mathbb{P}_{1_U}(D_v^{Q^0} < \mathcal{T}_v^{Q^0}) = 1.
\end{equation}
\item[(ii)] If $\lambda(r) \asymp r^{-(1 \wedge \beta(d-1))}$, then the activation time of $v$ is given either by its nucleation time with positive probability or by its disconnection time with positive probability. 
\item[(iii)] If $\lambda(r) \prec r^{-(1 \wedge \beta(d-1))}$, then, with high probability as $r \to \infty$, the activation time of $v$ is given by its nucleation time, i.e., 
\begin{equation}
\lim_{r \to \infty} \mathbb{P}_{1_U}( \mathcal{T}_v^{Q^0} < D_v^{Q^0}) = 1.
\end{equation}
\end{itemize}
\end{proposizione}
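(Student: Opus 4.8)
The plan is to compare two competing time scales: the disconnection time $D_v^{Q^0}$, whose order is known from Lemma~\ref{phvalues} to be $\mu(r) = \lambda(r)^{-1}$, and the nucleation time $\mathcal{T}_v^{Q^0}$ of the fork of $v$, which is a complete bipartite subgraph on $d$ nodes in $U$ and the single node $v$. Since $v$ has minimum degree $d$, its fork is a complete bipartite graph $K_{d,1}$, and by Theorem~\ref{adaptedthmpaper1} (or rather its queue-based analogue, Theorem~\ref{meantrtime} specialized to a complete bipartite graph with $|U| = d$) the nucleation time has order $r^{\beta(d-1)}$ in the subcritical regime $\beta \in (0, \tfrac{1}{d-1})$, order $r$ in the critical regime $\beta = \tfrac{1}{d-1}$, and order $r$ (the queue-hitting-zero time) in the supercritical regime $\beta > \tfrac{1}{d-1}$. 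In all cases this order equals $r^{1 \wedge \beta(d-1)}$. So the comparison between $\lambda(r)^{-1}$ and $r^{1 \wedge \beta(d-1)}$ is exactly the comparison between $\mathcal{T}_v^{Q^0}$ and $D_v^{Q^0}$ up to the fluctuations of each.

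First I would record that, with high probability as $r \to \infty$, $D_v^{Q^0} \asymp \mu(r) = \lambda(r)^{-1}$ by Lemma~\ref{phvalues}, and $\mathcal{T}_v^{Q^0} \asymp r^{1 \wedge \beta(d-1)}$ by the results quoted above (in the subcritical and critical regimes the nucleation time concentrates on the scale of its mean; in the supercritical regime it concentrates on the deterministic queue-emptying time, which is again of order $r$). For part~(i), when $\lambda(r) \succ r^{-(1 \wedge \beta(d-1))}$ we have $\mu(r) \prec r^{1 \wedge \beta(d-1)}$, so pick an intermediate scale $x(r)$ with $\mu(r) \prec x(r) \prec r^{1 \wedge \beta(d-1)}$; then $\mathbb{P}_u(D_v^{Q^0} \le x(r)) \to 1$ by the second half of Lemma~\ref{phvalues}, while $\mathbb{P}_u(\mathcal{T}_v^{Q^0} \ge x(r)) \to 1$ by the lower concentration of the nucleation time, and intersecting the two events gives $\mathbb{P}_u(D_v^{Q^0} < \mathcal{T}_v^{Q^0}) \to 1$. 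Part~(iii) is symmetric: when $\lambda(r) \prec r^{-(1 \wedge \beta(d-1))}$ we have $\mu(r) \succ r^{1 \wedge \beta(d-1)}$, choose $x(r)$ in between, use the first half of Lemma~\ref{phvalues} ($\mathbb{P}_u(D_v^{Q^0} \ge x(r)) \to 1$) together with the upper concentration of $\mathcal{T}_v^{Q^0}$, and intersect. Part~(ii): when $\lambda(r) \asymp r^{-(1 \wedge \beta(d-1))}$, both $D_v^{Q^0}$ and $\mathcal{T}_v^{Q^0}$ have the same order, so neither event has probability tending to $1$ or $0$; one argues that $D_v^{Q^0}/\mu(r)$ converges in distribution to the (non-degenerate) phase-type law $\mathrm{PH}(\mathbf{a}, S')$ from Lemma~\ref{disconnectiontime}(ii), while $\mathcal{T}_v^{Q^0}/r^{1 \wedge \beta(d-1)}$ converges to its own non-degenerate limit (exponential in the subcritical regime, truncated-polynomial in the critical one, a deterministic constant in the supercritical one, per the trichotomy described in Section~3.1), and since the two are independent, $\mathbb{P}_u(D_v^{Q^0} < \mathcal{T}_v^{Q^0})$ converges to a value strictly between $0$ and $1$, which yields both claims.

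The step I expect to be the main obstacle is the independence (or asymptotic independence) of $D_v^{Q^0}$ and $\mathcal{T}_v^{Q^0}$, and more subtly the fact that the nucleation dynamics of the fork of $v$ really does proceed, up to time $o(r^{1 \wedge \beta(d-1)})$, as if the edge set were frozen — i.e., that during the window of interest the active degrees inside the fork are not perturbed by edges appearing or disappearing elsewhere. In the regime of part~(i) this needs the edges to be ``fast enough'' that one can condition on the edge process without the node-activation process feeding back into it; in parts~(ii)--(iii) one needs to check that the rare fork-nucleation events and the edge-disconnection events do not interact over the relevant time scale. I would handle this by noting that the edge process $(Y(t))_{t\ge0}$ evolves autonomously (its rates do not depend on $X$ or $Q$), so conditionally on the trajectory of the edges of the fork, the nucleation time is determined by an inhomogeneous but explicit Markovian mechanism, and then showing — using the concentration statements of Lemma~\ref{phvalues} and the standard nucleation estimates from \cite{BdHNS18,BdHNS20} adapted to the time-varying fork — that the contribution of edge changes to the nucleation time is negligible on the scale $r^{1 \wedge \beta(d-1)}$ whenever $\lambda(r) \preceq r^{-(1 \wedge \beta(d-1))}$, and conversely that the fork is disconnected before any nucleation can complete whenever $\lambda(r) \succeq r^{-(1 \wedge \beta(d-1))}$. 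The boundary case (ii) is then exactly where both mechanisms contribute on the same scale, giving the stated dichotomy with positive probability for each.
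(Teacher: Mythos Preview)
Your proposal is correct and follows essentially the same route as the paper: both arguments rest on the observation that, by Lemma~\ref{phvalues}, $D_v^{Q^0}$ concentrates on the scale $\mu(r)=\lambda(r)^{-1}$, while the nucleation time of the fork concentrates on the scale $r^{1\wedge\beta(d-1)}$ (via the trichotomy from \cite{BdHNS18,BdHNS20}), and then the three cases follow by straightforward order comparison. The paper's proof is in fact considerably terser than yours---it simply records the two orders and declares the comparison ``immediate''---so your explicit intermediate-scale argument for (i) and (iii), your discussion of limiting distributions and independence for (ii), and your flagging of the edge/nucleation feedback issue all add rigor that the paper does not spell out.
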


\begin{proof}
Recall that $\mu(r) = \lambda(r)^{-1}$. By Lemma~\ref{phvalues}, with high probability as $r\ \to \infty$, $D_v^{Q^0}$ takes values of order $\mu(r)$. Recall also that, depending on the relation between $\beta$ and $d$, the nucleation time $\mathcal{T}_v^{Q^0}$ is given by an exponential random variable with mean of order $r^{\beta(d-1)}$, by a polynomial random variable with mean of order $r$, or by $T_U(r)$, which is the average time it takes for the queue lengths at nodes in $U$ to hit zero. Hence, with high probability as $r \to \infty$, $\mathcal{T}_v^{Q^0}$ takes values of order $r^{1 \wedge \beta(d-1)}$. It is therefore immediate to distinguish between the three cases.
\begin{itemize}
\item[(i)] Since $\mu(r) \prec r^{1 \wedge \beta(d-1)}$, with high probability as $r \to \infty$, $v$ activates due to absence of edges.
\item[(ii)] Since $\mu(r) \asymp r^{1 \wedge\beta(d-1)}$, there is a competition between the nucleation time $\mathcal{T}_v^{Q^0}$ and the phase-type random variable $D_v^{Q^0}$. Depending on their parameters, each of them can occur before the other with positive probability.
\item[(iii)] Since $\mu(r) \succ r^{1 \wedge\beta(d-1)}$, with high probability as $r \to \infty$, $v$ activates through the nucleation of its fork.
\end{itemize}
\end{proof}

\section{Proofs of the main results}
\label{proofs}
We are now able to prove the main results of the paper. We first prove Theorem~\ref{meantrtime} for dynamic bipartite graphs with queue-based activation rates. Then we adapt the arguments in \cite{BdHNS18} and \cite{BdHNS20} to prove Theorems~\ref{adaptedthmpaper1}--\ref{adaptedmeantrtime} for the special case of fixed activation rates. 

\subsection{Proof for the model with queue-based activation rates}
In this section we prove Theorem~\ref{meantrtime}.

\begin{proof}[Proof of Theorem~\ref{meantrtime}] 
We analyze the different types of dynamics separately.

\begin{itemize}
\item[(FD)] Assume $\lambda(r) \to \infty$ as $r \to \infty$. By Lemma~\ref{disconnectiontime}, the mean disconnection time of a node in $V$ is of order $\lambda(r)^{-1} = o(1)$. Let $v_1$ be the first node activating in $V$ and consider the event $\mathcal{B}_{v_1} = \{ D_{v_1}^{Q^0} < \mathcal{T}_{v_1}^{Q^0} \}$. The mean activation time of $v_1$ is then given by $\mathbb{E}_{1_U}[D_{v_1}^{Q^0} |\mathcal{B}_{v_1} ] \mathbb{P}_{1_U}(\mathcal{B}_{v_1}) + \mathbb{E}_{1_U}[\mathcal{T}_{v_1}^{Q^0}|\mathcal{B}_{v_1}^C] \mathbb{P}_{1_U}(\mathcal{B}_{v_1}^C)$. Even though we know from Proposition~\ref{NvsD}(i) that $\lim_{r \to \infty}\mathbb{P}_{1_U}(\mathcal{B}_{v_1}^C) = 0$, a priori this term may still affect the mean activation time, since the conditional expectation may be substantial. Since we are interested in the typical behavior of the network, we focus only on the first term of the sum. We deal with the remaining activating nodes in $V$ in a similar way: when computing the mean activation time of the $k$-th activating node $v_k$, we condition on the event $\mathcal{B}_{v_k} = \{  D_{v_k}^{Q^{k-1}} < \mathcal{T}_{v_k}^{Q^{k-1}}\}$, where we consider the updated queue lengths ${\bf Q^{k-1}}$ after $k-1$ nodes in $V$ have been activated; generalizing Proposition~\ref{NvsD}(i) to node $v_k$, we have that $\lim_{r \to \infty}\mathbb{P}_{1_U}(\mathcal{B}_{v_k}^C) = 0$. Hence, with high probability as $r \to \infty$, the activation time of each node in $V$ is given by its disconnection time, which has mean of order $\lambda(r)^{-1} = o(1)$. Denote by $\mathcal{B} = \bigcap_{i=1}^{|V|} \mathcal{B}_{v_i}$ the event that captures the typical behavior of the competition and recall that $\mathcal{F}$ is the event that the network follows the algorithm (see Remark~\ref{typical}(i)). Denote by $\mathcal{E} = \mathcal{F} \cap \mathcal{B}$ the event that captures the typical behavior of the network and note that $\mathcal{E}$ occurs with high probability as $r \to \infty$. Conditional on this event $\mathcal{E}$, the transition time of $G(\cdot)$ with initial queue lengths ${\bf Q^0}$ satisfies
\begin{equation}
\mathbb{E}_{1_U}[\mathcal{T}_{G(\cdot)}^{Q^0}\,  | \, \mathcal{E} ] \asymp \lambda(r)^{-1} = o(1), \qquad r \to \infty.
\end{equation}

\item[(RD)] Assume $\lambda(r) = C \in (0,\infty)$. By Lemma~\ref{disconnectiontime}, the mean disconnection time of a node in $V$ is of order $\lambda(r)^{-1} = \mathcal{O}(1)$. Note that nodes in $V$ of degree $1$ activate  in $\mathcal{O}(1)$ either because their only neighbor deactivates or due to absence of edges. Moreover, by Proposition~\ref{NvsD}(i), with high probability as $r \to \infty$, nodes in $V$ of degree greater than $1$ activate due to absence of edges in a time with mean of order $\lambda(r)^{-1}=\mathcal{O}(1)$. The argument is analogous to the one used above in the FD proof, where we condition on an event $\mathcal{B}$ that captures the typical behavior of the competition. Conditional on the event $\mathcal{E} = \mathcal{F} \cap \mathcal{B}$, which captures the typical behavior of the network and occurs with high probability as $r \to \infty$, the transition time of $G(\cdot)$ with initial queue lengths ${\bf Q^0}$ satisfies
\begin{equation}
\mathbb{E}_{1_U}[\mathcal{T}_{G(\cdot)}^{Q^0} \, | \, \mathcal{E}] \asymp \lambda(r)^{-1} = \mathcal{O}(1), \qquad r \to \infty.
\end{equation}

\item[(SDnc)] Assume $\lambda(r) \to 0$ as $r \to \infty$ with $\lambda(r) \prec r^{-(1 \wedge \beta(d^*-1))}$. In order to capture the typical behavior of the competition, when computing the mean activation time of the $k$-th activating node $v_k \in V$, we now condition on a new event $\mathcal{B}_{v_k} = \{  \mathcal{T}_{v_k}^{Q^{k-1}} < D_{v_k}^{Q^{k-1}} \}$. Indeed, differently from the FD and RD proofs, Lemma~\ref{disconnectiontime} shows that the mean disconnection time of $v_k$ is of order larger than $r^{1 \wedge \beta(d^*-1)}$, and Proposition~\ref{NvsD}(iii) implies that $\lim_{r \to \infty} \mathbb{P}_{1_U} (\mathcal{B}^C_{v_k}) = 0$. Hence, with high probability as $r \to \infty$, each node in $V$ activates through the nucleation of its fork, which is at most of order $r^{1 \wedge \beta(d^*-1)}$. The dynamics are very slow, almost frozen, and so they do not affect the nucleation of the forks and the transition. Write $\mathcal{B} = \bigcap_{i=1}^{|V|} \mathcal{B}_{v_i}$. Conditional on the event $\mathcal{E} = \mathcal{F} \cap \mathcal{B}$, the transition time of $G(\cdot)$ with initial queue lengths ${\bf Q^0}$ satisfies Theorem 3.3 in \cite{BdHNS20} and the network behaves as if there were no dynamics.

\item[(SDc)] Assume $\lambda(r) \to 0$ as $r \to \infty$ with $\lambda(r) = r^{-\alpha}$, with $0 < \alpha \leq 1 \wedge \beta(d^*-1)$. This is the most interesting type of dynamics, since it competes with the fork nucleations. The activation of the nodes in $V$ can occur both because of the absence of their edges and because of the nucleation of their forks. Denote by $\hat{d}$ the largest integer such that $\beta(\hat{d} -1) < \alpha$. Recall that $\mathcal{A}$ denotes the set of admissible paths and fix a path $a \in \mathcal{A}$. Consider the sequence of activating nodes along the path $a$ up to the step in which the degree is larger than $\hat{d}$. Say that at step $k$ we have $\bar{d}_k > \hat{d}$. Consider only the first $k-1$ steps. We indicate by $\mathcal{F}_a(\alpha)$ the event that the network follows the path $a \in \mathcal{A}$ until time scale $r^{\alpha}$. On time scale $r^{\alpha}$ the dynamics start competing with the nucleations, and the order of activation of the remaining nodes described by the algorithm is not preserved anymore. In other words, the order of activation of nodes in $V$ follows the order of activation of the path $a$ only for the first $k-1$ nodes. Consider at each step a node $v_j$ with minimum degree $\bar{d}_j$ for $j=1, \ldots, k-1$. When computing the mean activation time of $v_j$, we condition on an event $\mathcal{B}_{v_j}  = \{ \mathcal{T}_{v_j}^{Q^{j-1}} < D_{v_j}^{Q^{j-1}} \}$. Since by Lemma~\ref{disconnectiontime} the mean disconnection time of each $v_j$ is of order $r^{\alpha}$, by Proposition~\ref{NvsD}(iii) we have $\lim_{r \to \infty} \mathbb{P}_{1_U} (\mathcal{B}^C_{v_j})  = 0$. Hence, with high probability as $r \to \infty$, the activation time of each of the first $k-1$ activating nodes in $V$ is given by their nucleation time, which has mean of order less than or equal to $r^{1 \wedge\beta(\hat{d}-1)}$. We denote by $\mathcal{B}_a(\alpha) = \bigcap_{i=1}^{k-1} \mathcal{B}_{v_j}$ the event that captures the typical behavior of the competition for the first $k-1$ nodes. We treat the subcritical, critical and supercritical regimes separately.

\begin{itemize}
\item[(I)] $\beta \in (0, \frac{1}{d^*-1})$: subcritical regime. 
We have $0 < \alpha \leq \beta(d^*-1) < 1$. The activation time of the next activating node is of order $r^{\alpha}$. It cannot be of smaller order since at step $k$ we have $\bar{d}_k > \hat{d}$ by construction. It cannot be of higher order either since the disconnection time of any of the remaining nodes is of order $r^{\alpha}$. 
After this activation, there might be nodes whose degree has decreased and whose nucleation time is of a smaller order. When we sum the mean activation times of the nodes in $V$ to compute the mean transition time, we see that these nodes will not contribute significantly as $r \to \infty$. All the remaining nodes are likely to activate in any possible order, but none of them will have an activation time of order larger than $r^{\alpha}$.
To conclude, the order of activation of nodes in $V$ follows the path $a$ as long as the nucleation times associated to the nodes are of order smaller than $r^{\alpha}$. After that, the remaining nodes can activate with positive probability in any order with an activation time of order at most $r^{\alpha}$. Hence, conditional on the event $\mathcal{E}_a(\alpha) = \mathcal{F}_a(\alpha) \cap \mathcal{B}_a(\alpha)$, the transition time satisfies 
\begin{equation}
\mathbb{E}_{1_U}[\mathcal{T}_{G(\cdot)}^{Q^0} \,|\, \mathcal{E}_a(\alpha)] \asymp r^{\alpha}, \qquad r \to \infty.
\end{equation}
Next, denote by $\mathcal{F}(\alpha)$ the event that the network follows the algorithm until time scale $r^{\alpha}$ and by $\mathcal{B}(\alpha)$ the event that captures the typical behavior of the competition until time scale $r^{\alpha}$. Then, conditional on the event $\mathcal{E}(\alpha) = \mathcal{F}(\alpha) \cap \mathcal{B}(\alpha)$, the transition time satisfies
\begin{equation}
\mathbb{E}_{1_U}[\mathcal{T}_{G(\cdot)}^{Q^0} \, | \, \mathcal{E}(\alpha)] \asymp r^{\alpha}, \qquad r \to \infty.
\end{equation}

\item[(II)] $\beta = \frac{1}{d^*-1}$: critical regime.
For $0 < \alpha < 1$, the situation is the same as in the subcritical regime described above. 
For $\alpha =1$, the activation time of the next activating node is of order $r$. The order of activation of nodes in $V$ follows the algorithm as long as the nucleation times associated to the nodes are of order smaller than $r$. After that, the remaining nodes can activate with positive probability in any order with an activation time of order at most $r$. Denote by $\mathcal{F}(1)$ the event that the network follows the algorithm until time scale $r$ and by $\mathcal{B}(1)$ the event that captures the typical behavior of the competition until time scale $r$. Then, conditional on the event $\mathcal{E}(1) = \mathcal{F}(1) \cap \mathcal{B}(1)$, the transition time satisfies
\begin{equation}
\mathbb{E}_{1_U}[\mathcal{T}_{G(\cdot)}^{Q^0} \, | \,\mathcal{E}(1)  ] \asymp r, \qquad r \to \infty.
\end{equation}
Moreover, we know from \cite{BdHNS18} that with high probability as $r \to \infty$ the nucleation time of a subcritical or critical fork is smaller than $T_U(r)$. This implies that even if there are nodes with disconnection time of order $r$ but larger than $T_U(r)$, with high probability as $r \to \infty$ they activate through the nucleation of their fork. Hence,
\begin{equation}
\lim_{r \to \infty} \mathbb{P}_{1_U} \big(\mathcal{T}_{G(\cdot)}^{Q^0} < T_U(r) \, | \, \mathcal{E}(1) \big)=1.
\end{equation}

\item[(III)] $\beta \in (\frac{1}{d^*-1}, \infty)$: supercritical regime.
For $0 < \alpha < 1$, the situation is the same as in the subcritical regime described above.
For $\alpha = 1$, with positive probability there is at least one supercritical node that activates in time $T_U(r)$. Indeed, depending on whether its disconnection time (which is of order $r$) is larger or smaller than $T_U(r)$, then the node activates in $T_U(r)$ because the queue lengths at nodes in $U$ hit zero  or in a time smaller than $T_U(r)$ because of absence of edges, respectively. Both scenarios can occur with positive probability. Hence, conditional on the event $\mathcal{B}(1)$, the transition time satisfies 
\begin{equation}
\lim_{r \to \infty} \mathbb{P}_{1_U} \big(\mathcal{T}_{G(\cdot)}^{Q^0} = T_U(r) \,|\, \mathcal{B}(1)\big) = 1- \lim_{r \to \infty} \mathbb{P}_{1_U} \big(\mathcal{T}_{G(\cdot)}^{Q^0} < T_U(r) \,|\, \mathcal{B}(1)\big) > 0.
\end{equation}
Recall from \cite{BdHNS20} that in the supercritical regime the behavior of the transition time does not depend on which path the network follows, hence it is not needed to condition also on the event $\mathcal{F}(1)$.
\end{itemize}
\end{itemize}
\end{proof}

\subsection{Proofs for the model with fixed activation rates}

In this Section we prove Theorems~\ref{adaptedthmpaper1}--\ref{adaptedmeantrtime}.

\begin{proof}[Proof of Theorem~\ref{adaptedthmpaper1} (Complete bipartite graphs)] 
It follows from \cite[Sections 4.1-4.2]{BdHNS18}. We compute the critical time scale and the mean transition time using fixed activation rates instead of time depending ones. In both the critical and subcritical regimes, the pre-factor turns out to be $\frac{1}{|U|}$ and the law is exponential. In the critical regime, we know that the queue lengths decrease significantly after a time of order $r$. However, this does not affect the transition time, since now the activation rates do not depend on the queue lengths. In the supercritical regime, we still have the same behavior as in the model with queue-based activation rates. Indeed, when the queue lengths at nodes in $U$ hit zero, the nodes in $U$ deactivate by assumption and the transition occurs.
\end{proof}

\begin{proof}[Proof of Theorem~\ref{adaptedthmpaper2} (Arbitrary bipartite graphs)]
The claims follow from Theorem~\ref{adaptedthmpaper1} and from the analysis of the algorithm and the next nucleation times in \cite[Sections 2 and 4.2]{BdHNS20}. We derive the mean transition time along the paths generated by the algorithm by computing the next nucleation times at each step. In the subcritical regime, the nucleation times of nodes in $V$ are all exponentially distributed and independent of each other. Indeed, the activation rates are the same, independently of the queue lengths decreasing over time. At each step $k$, the next nucleation time is the minimum of $n_k$ i.i.d.\ exponential random variables, and hence its mean exhibits the term $f_k=\frac{1}{n_k}$ in the pre-factor. In the critical regime, the pre-factor of the mean transition time along each path must be below the value $\frac{\gamma_U}{c-\rho_U}$, otherwise the supercritical regime applies and the transition occurs because the queue lengths at nodes in $U$ hit 0. If we assume that $\frac{\gamma_U}{c-\rho_U} > 1$, then the nucleation of a fork occurs before the queue lengths at nodes in $U$ hit zero. We are able to derive the law of the transition time along each path for both the subcritical and critical regimes. Both are described by convolutions of the exponential laws of the next nucleation times of the activating nodes in $V$. In the supercritical regime, we have the same behavior as in the model with queue-based activation rates.
\end{proof}

\begin{proof}[Proof of Theorem~\ref{adaptedmeantrtime} (Dynamic bipartite graphs)]
The claim follows from Theorem~\ref{adaptedthmpaper2} and the intuition behind Proposition~\ref{NvsD}. The order of the mean transition time in the model with fixed activation rates is the same as in the model with queue-based activation rates. The dynamics compete with the nucleations of the nodes in the same way, depending on the speed. The different types of dynamics, fast, regular and slow, lead to the same results as in Theorem~\ref{meantrtime}.
\end{proof}

\section{Conclusion}

In this conclusive section we summarize the results obtained in this paper and discuss the main challenges.  

We explore a dynamic graph model in order to capture some effects of user mobility in wireless random-access networks. Nodes can be active or inactive and represent transmitter-receiver pairs, while edges model the interference between nearby transmissions in such a way that two neighboring nodes are not allowed to be active simultaneously. Following the work in \cite{BdHNS18} and \cite{BdHNS20}, we focus on bipartite interference graphs and we introduce edge dynamics by allowing the edges to appear and disappear over time according to Poisson i.i.d.\ clocks attached to each of them. Motivated by the fact that metastability properties are crucial in analyzing starvation behavior and designing mechanisms to improve the performance of the network, we study the transition time between the two dominant states where one part of the network is active, $U$, and the other part is inactive, $V$.

We first consider a model with queue-based activation rates, which turns out to be quite challenging since it leads to two levels of complexity, driven by the queue dependences of the activation rates and by the edge dynamics. We then also consider a model with fixed activation rates, since not much is known about the effects of the dynamics even for this simplified model.

The parameter $r \to \infty$ represents the heavy load scenario where the queue lengths (hence the activation rates) grow large. We are able to determine how the dynamics affect the mean transition time by distinguishing between different speeds, namely, fast, regular and slow dynamics. The most interesting scenarios arise in the case of slow dynamics, in particular when the rate of the dynamics is competitive, in the sense that some nodes in $V$ can activate both because their neighbors are simultaneously inactive and because the edges connecting them to their neighbors disappear. Our main result focuses on the typical behavior of the network and states that, if the rate of the dynamics is $\lambda(r)$, then the mean transition time is of order $\lambda(r)^{-1}$. For static bipartite graphs, a randomized algorithm generates all the possible orders of activation of the nodes in $V$ and allows us to compute precise asymptotics for the mean transition time. For dynamic bipartite graphs instead, the situation is different. Each node in $V$ whose nucleation time is of smaller order than $\lambda(r)^{-1}$ activates through the nucleation of its fork. On time scale $\lambda(r)^{-1}$ the dynamics start competing with the nucleations and the order of activation of the remaining nodes described by the algorithm is not preserved anymore. 

The evolution of the network is captured by a continuous-time Markov process, which we refer to as the graph evolution process. It keeps track of the state of the nodes, the queue lengths and the edges between nodes, hence it is crucial to describe how the (active) degrees of the nodes change over time. The main challenge in describing the graph evolution process is that on time scale $\lambda(r)^{-1}$ any of the remaining nodes could activate next with positive probability. The activation of a node due to absence of edges is captured by the scenario in which its active degree hits $0$, while the activation of a node through the nucleation of its fork depends on the aggressiveness of the activation rates and on the number of active neighbors. Both types of activation are determined by the degrees of the nodes, hence the transition time is strongly related to the graph evolution process. The complicated nature of the process prevents us from deriving an explicit formula for the pre-factor of the mean transition time. Indeed, in order to give precise asymptotics, a better understanding and control of how the degrees of the nodes change over time is required. This goes beyond the scope of the present paper, but it could lead to interesting future research directions.

\vspace{1cm}
\noindent 
\emph{Acknowledgments}: The research in this paper was supported by the Netherlands Organisation for Scientific Research (NWO) through Gravitation-grant NETWORKS-024.002.003. The author thanks professors S.C.\ Borst (Eindhoven University of Technology), F. den Hollander (Leiden University) and F.R.\ Nardi (University of Florence) for the helpful suggestions and the useful discussions.



\appendix
\renewcommand*{\thesection}{\Alph{section}}



\begin{thebibliography}{paper}




\bibitem{AHK17}
M.S.\ Ali, E.\ Hossain, D.I.\ Kim.
\emph{LTE/LTE-A random access for massive Machine-Type Communications in smart cities}, 
IEEE Communications Magazine 55(1), 76--83, 2017.


\bibitem{BB09t}
F.\ Baccelli, B.\ Błaszczyszyn. 
\emph{Stochastic Geometry and Wireless Networks: Volume I Theory}, 
Foundations and Trends in Networking 3(3/4), 249--449, 2009.

\bibitem{BB09a}
F.\ Baccelli, B.\ Błaszczyszyn. 
\emph{Stochastic Geometry and Wireless Networks: Volume II Applications}, 
Foundations and Trends in Networking 4(1/2), 1--312, 2009.


\bibitem{BBP04}
T.\ Bonald, S.C.\ Borst, A.\ Proutiere.
\emph{How mobility impacts the flow-level performance of wireless data systems}.
In: Proceedings of the IEEE INFOCOM 2004 Conference, volume 3, 1872--1881, 2004.

\bibitem{BBHJP09}
T.\ Bonald, S.C.\ Borst, N.\ Hegde, M.\ Jonckheere, A.\ Proutiere.
\emph{Flow-level performance and capacity of wireless networks with user mobility},
Queueing Systems 63(1-4), 131--164, 2009.

\bibitem{BdHNS18}
S.C.\ Borst, F.\ den Hollander, F.R.\ Nardi, M.\ Sfragara.
\emph{Transition time asymptotics of queue-based activation protocols in random-access networks}, Stochastic Processes and Their Applications 130(12), 7483--7517, 2020.

\bibitem{BdHNS20}
S.C.\ Borst, F.\ den Hollander, F.R.\ Nardi, M.\ Sfragara.
\emph{Wireless random-access networks with bipartite interference graphs},
[arXiv:2001.02841], 2020.

\bibitem{BdHNT19}
S.C.\ Borst, F.\ den Hollander, F.R.\ Nardi, S.\ Taati.
\emph{Crossover times in bipartite networks with activity constraints and time-varying switching rates}. [arXiv:1912.13011], 2019.

\bibitem{BHP09}
S.C.\ Borst, N.\ Hegde, A.\ Proutiere.
\emph{Mobility-driven scheduling in wireless networks}.
In: Proceedings of the IEEE INFOCOM 2009 Conference, 1260--1268, 2009.

\bibitem{BPH06}
S.C.\ Borst, A.\ Proutiere, N.\ Hegde.
\emph{Capacity of wireless data networks with intra- and inter-cell mobility}.
In: Proceedings of the IEEE INFOCOM 2006 Conference, 1058--1069, 2006.

\bibitem{BS13}
S.C.\ Borst, F.\ Simatos.
\emph{A stochastic network with mobile users in heavy traffic},
Queueing Systems 74(1), 1--40, 2013.

\bibitem{B13}
N.\ Bouman. 
Queue-based random access in wireless networks. 
Ph.D. thesis, Eindhoven University of Technology, 2013.

\bibitem{BCvL14}
N.\ Bouman, S.C.\ Borst, J.S.H.\ van Leeuwaarden. 
\emph{Delay performance in random-access networks},
Queueing Systems 77, 211--242, 2014.

\bibitem{BdH05}
A.\ Bovier, F.\ den Hollander. 
Metastability.
Springer, 2005.

\bibitem{CBMSW21}
E.\ Castiel, S.C.\ Borst, L.\ Miclo, F.\ Simatos, P.A.\ Whiting.
\emph{Induced idleness leads to deterministic heavy traffic limits for queue-based random-access algorithms},
Annals of Applied Probability, 31(2), 941--971, 2021.

\bibitem{CBvL14}
F.\ Cecchi, S.C.\ Borst, J.S.H.\ van Leeuwaarden. 
\emph{Throughput of CSMA networks with buffer dynamics}, 
Performance Evaluation 79, 216--234, 2014.

\bibitem{DP18}
I.\ Dimitriou, N.\ Pappas.
\emph{Stable throughput and delay analysis of a random access
network with queue-aware transmission},
IEEE Transactions on Wireless Communications 17(5), 3170--3184, 2018.

\bibitem{DP19}
I.\ Dimitriou, N.\ Pappas. 
\emph{A queue-based random access scheme in network-level cooperative wireless networks}. In: Proceedings of the ICC 2019 Conference, 1--6, 2019.


\bibitem{DDT18}
M.\ Durvy, O.\ Dousse, P.\ Thiran. 
\emph{Border effects, fairness, and phase transitions in large wireless networks}.
In: Proceedings of the IEEE INFOCOM 2008 Conference, 601--609, 2018.

\bibitem{DDT09} 
M.\ Durvy, O.\ Dousse, P.\ Thiran. 
\emph{On the fairness of large CSMA networks},
IEEE Journal on Selected Areas in Communications 27, 1093--1104, 2009.

\bibitem{FMNS15}
R.\ Fernandez, F.\ Manzo, F.R.\ Nardi, E.\ Scoppola. 
\emph{Asymptotically exponential hitting times and metastability: a pathwise approach without reversibility},
Electronic Journal of Probability 20(122), 1--37, 2015.

\bibitem{GSK08}
M.\ Garetto, T.\ Salonidis, E.W.\ Knightly. 
\emph{Modeling per-flow throughput and capturing starvation in CSMA multi-hop wireless networks},
IEEE/ACM Transactions on Networking 16(4), 864--877, 2008.

\bibitem{GBW14}
J.\ Ghaderi, S.C.\ Borst, P.A.\ Whiting. 
\emph{Queue-based random-access algorithms: fluid limits and stability issues}, 
Stochastic Systems 4, 81--156, 2014.

\bibitem{GS10}
J.\ Ghaderi, R.\ Srikant. 
\emph{On the design of efficient CSMA algorithms for wireless networks}.
In: Proceedings of the CDC 2010 Conference, 954--959, 2010.

\bibitem{GT02}
M.\ Grossglauser, D.N.C.\ Tse.
\emph{Mobility increases the capacity of ad hoc wireless networks},
IEEE/ACM Transactions on Networking 10(4), 477--486, 2002.

\bibitem{HSHDS19}
B.\ Han, V.\ Sciancalepore, O.\ Holland, M.\ Dohler, H.D.\ Schotten. 
\emph{D2D-Based Grouped Random Access to Mitigate Mobile Access Congestion in 5G Sensor Networks},
IEEE Communications Magazine 57(9), 93--99, 2019.

\bibitem{HHN18}
M.\ Hasan, E.\ Hossain, D.\ Niyato.
\emph{Random access for Machine-to-Machine Communication in LTE-Advanced networks: Issues and approaches},
IEEE Communications Magazine 51(6), 86--93, 2018

\bibitem{dHNT18}
F.\ den Hollander, F.R.\ Nardi, S.\ Taati.
\emph{Metastability of hard-core dynamics on bipartite graphs},
Electronic Journal of Probability 23(97), 1--65, 2018.

\bibitem{JSSW10}
L.\ Jiang, D.\ Shah, J.\ Shin, J.\ Walrand. 
\emph{Distributed random access algorithm: scheduling and congestion control},
IEEE Transactions on Information Theory 56, 6182--6207, 2010.

\bibitem{K79}
J.\ Keilson. 
Markov Chain Models - Rarity and Exponentiality. 
Applied Mathematical Sciences 28, Springer, 1979.

\bibitem{KT75}
L.\ Kleinrock, F.A.\ Tobagi. 
\emph{Packet switching in radio channels: Part I - Carrier Sense Multiple-Access modes and their throughput-delay characteristics}, 
IEEE Transactions on Communications 23(12), 1400--1416, 1975.

\bibitem{LR99}
G.\ Latouche, V.\ Ramaswami.
Introduction to Matrix Analytic Methods in Stochastic Modeling. 
SIAM, 1999.

\bibitem{LK13}
R.\ Laufer, L.\ Kleinrock. 
\emph{On the capacity of wireless CSMA/CA multihop networks}.
In: Proceedings of the IEEE INFOCOM 2013 Conference, 1312--1320, 2013.

\bibitem{LCTH16}
S.Y.\ Lien, C.C.\ Chien, F.M.\ Tseng, T.C.\ Ho.
\emph{3GPP Device-to-Device Communications for beyond 4G cellular networks}, 
IEEE Communications Magazine, 54(3), 29–-35, 2016.

\bibitem{NZB16}
F.R.\ Nardi, A.\ Zocca, S.C.\ Borst. 
\emph{Hitting time asymptotics for hard-core interactions on grids},
Journal of Statistical Physics 162, 522--576, 2016.

\bibitem{NTS10}
J.\ Ni, B.\ Tan, R.\ Srikant.
\emph{Q-CSMA: queue-length based CSMA/CA algorithms for achieving maximum throughput and low delay in wireless networks}. 
In: Proceedings of the IEEE INFOCOM 2010 Mini-Conference, 1--5, 2010.

\bibitem{RSS09}
S.\ Rajagopalan, D.\ Shah, J.\ Shin. 
\emph{Network adiabatic theorem: An efficient randomized protocol for contention resolution}, 
ACM SIGMETRICS Performance Evaluation Review 37, 133--144, 2009.

\bibitem{R02}
T.S.\ Rappaport.
Wireless Communications: Principles and Practice, second edition.
Prentice Hall, 2002.

\bibitem{SGBHP16}
V.\ Sciancalepore, D.\ Giustiniano, A.\ Banchs, A.\ Hossmann-Picu.
\emph{Offloading cellular traffic through opportunistic communications: Analysis and optimization},
IEEE Journal on Selected Areas in Communications 34(1), 122–-137, 2016.

\bibitem{SS10}
D.\ Shah, J.\ Shin. 
\emph{Delay-optimal queue-based CSMA},
ACM SIGMETRICS Performance Evaluation Review 38(1), 373--374, 2010.

\bibitem{SS12}
D.\ Shah, J.\ Shin. 
\emph{Randomized scheduling algorithms for queueing networks}, 
Annals of Applied Probability 22, 128--171, 2012.

\bibitem{SS20}
F.\ Simatos, A.\ Simonian.
\emph{Mobility can drastically improve the heavy-traffic performance from $\frac{1}{1 - \varrho}$ to $\log(1/(1 - \varrho))$},
Queueing Systems 95(1-2), 1--28, 2020.

\bibitem{SA11}
V.G.\ Subramanian, M.\ Alanyali.
\emph{Delay performance of CSMA in networks with bounded degree conflict graphs}.
In: Proceedings of the IEEE ISIT 2011 Conference, 2373--2377, 2011.

\bibitem{TV05}
D.N.C.\ Tse, P.\ Viswanath.
Fundamentals of Wireless Communications.
Cambridge University Press, 2005.

\bibitem{V00}
R.S.\ Varga.
Matrix Iterative Analysis, second revised and expanded edition.
Springer-Verlag Berlin Heidelb, 2000.

\bibitem{vdVBvLP10}
P.M.\ van de Ven, S.C.\ Borst, J.S.H.\ van Leeuwaarden, A.\ Prouti\`ere. 
\emph{Insensitivity and stability of random-access networks}, 
Performance Evaluation 67(11), 1230--1242, 2010.

\bibitem{VTL02}
P.\ Viswanath, D.N.C.\ Tse, R.\ Laroia.
\emph{Opportunistic beamforming using dumb antennas},
IEEE Transactions on Information Theory 48(6), 1277--1294, 2002.

\bibitem{WGYLWCZ16}
Y.\ Wu, W.\ Guo, H.\ Yuan, L.\ Li, S.\ Wang, X.\ Chu, J.\ Zhang.
\emph{Device-to-Device meets LTE-Unlicensed},
IEEE Communications Magazine 54(5), 154--159, 2016

\bibitem{ZCK15}
E.\ Zihan, K.W.\ Choi, D.I.\ Kim.
\emph{Distributed Random Access Scheme for Collision Avoidance in Cellular Device-to-Device Communication},
IEEE Transactions on Wireless Communications 14(7), 3571--3585, 2015.

\bibitem{Z15}
A.\ Zocca. 
Spatio-temporal dynamics of random-access networks: An interacting-particle approach. Ph.D. thesis, Eindhoven University of Technology, 2015.

\bibitem{Z17}
A.\ Zocca. 
\emph{Tunneling of the hard-core model on finite triangular lattices},
Random Structures and Algorithms 55, 215--246, 2017.

\bibitem{Z18}
A.\ Zocca. 
\emph{Low-temperature behavior of the multicomponent Widom–Rowlison model on finite square lattices}, 
Journal of Statistical Physics 171, 1--37, 2018.


\end{thebibliography}
\end{document}